  \theoremstyle{plain}
    \newtheorem{thm}{Theorem}[section]
    \newtheorem{prop}[thm]{Proposition}
   \newtheorem{lemma}[thm]{Lemma}
    \newtheorem{corollary}[thm]{Corollary}
    \newtheorem{subsec}[thm]{}
\theoremstyle{definition}
    \newtheorem{defn}[thm]{Definition}
\theoremstyle{remark}
\title{}
\author{}
\date{}
\begin{document}

\title{Rota-Baxter operators on involutive associative algebras}

\author{Apurba Das}
\address{Department of Mathematics and Statistics,
Indian Institute of Technology, Kanpur 208016, Uttar Pradesh, India.}
\email{apurbadas348@gmail.com}

\curraddr{}
\email{}

\subjclass[2010]{16E40, 16S80, 16W99}
\keywords{Involutive algebras, Hochschild cohomology, Rota-Baxter operators, Deformations, Dendriform algebras.}

%\maketitle

%\begin{center}
%Department of Mathematics and Statistics,\\
%Indian Institute of Technology, Kanpur 208016, Uttar Pradesh, India.\\
%Email: apurbadas348@gmail.com
%\end{center}

%\curraddr{}
%\email{}

%\subjclass[2010]{}
%\keywords{}

\begin{abstract}
In this paper, we consider Rota-Baxter operators on involutive associative algebras. We define cohomology for Rota-Baxter operators on involutive algebras that governs the formal deformation of the operator. This cohomology can be seen as the Hochschild cohomology of a certain involutive associative algebra with coefficients in a suitable involutive bimodule. We also relate this cohomology with the cohomology of involutive dendriform algebras. Finally, we show that the standard Fard-Guo construction of the functor from the category of dendriform algebras to the category of Rota-Baxter algebras restricts to the involutive case.\\
\end{abstract}

\noindent

\thispagestyle{empty}

\maketitle

\tableofcontents

\vspace{0.2cm}

\section{Introduction}
Rota-Baxter operators are an algebraic abstraction of the integral operator that was first introduced by Baxter in his study of the fluctuation theory in probability \cite{baxter}. The study of Rota-Baxter operators was further developed by Rota \cite{rota} and Cartier \cite{cart} in relationship with combinatorics. They were found important applications in the Connes-Kreimer's algebraic approach of the renormalization of quantum field theory \cite{conn}. Rota-Baxter operators are also useful to study splitting of algebras. Namely, Rota-Baxter operators give rise to dendriform algebras which are splitting of associative algebras \cite{loday,aguiar}. In \cite{fard-guo} Ebrahimi-Fard and Guo constructs the universal enveloping Rota-Baxter algebra of a dendriform algebra in view of the standard universal enveloping algebra of a Lie algebra. The cohomology and deformation problem of associative Rota-Baxter operators (more generally of relative Rota-Baxter operators \cite{uchino}) has been recently studied by the author in \cite{das-rota}.

\medskip

On the other hand, classical algebras such as associative algebras, $A_\infty$-algebras and $L_\infty$-algebras equipped with involutions are studied in the last few years. An involutive associative algebra is an associative algebra $A$ together with a linear map $*: A \rightarrow A, ~ a \mapsto a^*$ satisfying $a^{**} = a$ and $(ab)^* = b^*a^*$, for $a,b \in A$. Such involutive algebras first appeared in mathematical physics in the context of an unoriented version of topological field theory \cite{cos}. Involutive algebras often appear in the standard constructions of algebras arising in geometric contexts, when the underlying geometric object has an involution \cite{braun,cos}. For example, the de Rham cohomology of a manifold with an involution carries an involutive $A_\infty$-algebra structure \cite{loday-val}. In \cite{braun} Braun has defined Hochschild cohomology of involutive associative algebras. An interpretation of Braun's Hochschild cohomology is given by the authors in \cite{fer-gian} using involutive Bar complex which led them to also introduce Hochschild homology of involutive associative algebras. Recently, with Saha, the present author gave a more explicit description of Hochschild cohomology of involutive associative algebras \cite{das-saha}. More precisely, they defined involutive dendriform algebras, their cohomology and find relations with the Hochschild cohomology of involutive associative algebras.

\medskip

Our aim in this paper is to study (relative) Rota-Baxter operators on involutive associative algebras. Let $(A, *)$ be an involutive associative algebra and $(M, *)$ be an involutive $A$-bimodule. A linear map $T: M \rightarrow A$ is said to be a relative Rota-Baxter operator on $A$ with respect to the involutive $A$-bimodule $M$ if $T$ satisfies $T(u^*) = T(u)^*$ and the following identity
\begin{align*}
T(u) T(v) = T (u T(v) + T(u) v ),~ \text{ for } u, v \in M.
\end{align*}
From the last identity, it follows that $T$ is a relative Rota-Baxter operator on the ordinary associative algebra $A$ with respect to the ordinary $A$-bimodule $M$. Here the word `ordinary' means that we are not considering the involution.
By definition, a Rota-Baxter operator on an involutive associative algebra $A$ is a relative Rota-Baxter operator on the involutive algebra $A$ with respect to itself. A (relative) Rota-Baxter operator on an involutive algebra induces an involutive dendriform algebra structure on the domain of the operator. 
Using Gerstenhaber's bracket on involutive Hochschild cochains and Voronov's derived bracket \cite{voro}, in Section \ref{sec-mc}, we construct a graded Lie algebra whose Maurer-Cartan elements are relative Rota-Baxter operators. Thus, a relative Rota-Baxter operator $T$ on an involutive algebra $A$ with respect to an involutive $A$-bimodule $M$ induces cohomology, called the cohomology of $T$. 

\medskip

In Section \ref{sec-coho}, we show that the cohomology of $T$ introduced in the previous section can be seen as the Hochschild cohomology of an involutive associative algebra with coefficients in a suitable involutive bimodule. For a relative Rota-Baxter operator $T$ on an involutive associative algebra $A$ with respect to an involutive bimodule $M$, we show that the ordinary cohomology of $T$ (viewed as a relative Rota-Baxter operator on the ordinary algebra $A$ with respect to the ordinary bimodule $M$) has a direct sum decomposition of the involutive cohomology of $T$ and a skew-factor. Finally, we obtain a morphism from the cohomology of a relative Rota-Baxter operator $T$ and the cohomology of the induced involutive dendriform algebra.

\medskip

The classical deformation theory of Gerstenhaber \cite{gers} has been extended to associative Rota-Baxter operators in \cite{das-rota}. In Section \ref{sec-def}, we study deformations of a relative Rota-Baxter operator $T$ on an involutive associative algebra with respect to an involutive bimodule. Our main results in this section are similar to the results of \cite{das-rota}. We show that the linear term in a formal deformation of $T$ is a $1$-cocycle in the cohomology of $T$, called the infinitesimal of the deformation. Moreover, equivalent deformations have cohomologous infinitesimals. Given a finite order deformation of $T$, we associate a $2$-cocycle in the cohomology complex of $T$, called the obstruction $2$-cocycle. When the corresponding cohomology class vanishes, the given deformation extends to deformation of next order.

\medskip

Finally, in Section \ref{sec-fard-guo}, we first recall the construction of the universal enveloping Rota-Baxter algebra of a dendriform algebra. Then we show that this construction restricts to the corresponding algebras equipped with involutions.

\medskip

All vector spaces, linear maps and tensor products are over a field $\mathbb{K}$ of characteristic $0$.

\section{(Relative) Rota-Baxter operators on involutive associative algebras}\label{sec-mc}
In this section, we introduce relative Rota-Baxter operators on involutive associative algebra with respect to an involutive bimodule. A particular case is given by Rota-Baxter operators on involutive algebra. We construct a graded Lie algebra whose Maurer-Cartan elements are relative Rota-Baxter operators.

\subsection{Involutive associative algebras and Hochschild cohomology}
An involution on a vector space $V$ is a linear map $* : V \rightarrow V,$ $v \mapsto v^*$ satisfying $v^{**} = v$, for all $v \in V$. Thus, an involution on $V$ is an invertible linear map on $V$ that equals to its inverse.

\begin{defn}
An involutive associative algebra is an associative algebra $A$ together with an involution $* : A \rightarrow A$ that satisfies $(ab)^* = b^* a^*$, for all $a, b \in A$.
\end{defn}

A morphism between involutive associative algebras is a morphism between underlying algebras preserving the involutions.
Let $A$ be an involutive associative algebra. An involutive $A$-bimodule is an ordinary $A$-bimodule $M$ together with an involution $* : M \rightarrow M$ that satisfies $(au)^* = u^* a^*$ and $(ua)^* = a^* u^*,$ for $a \in A, u \in M$. 

In this case, the direct sum $A \oplus M$ carries an involutive associative algebra structure (called the semi-direct product) with the involution $(a,u)^* = (a^*, u^*)$ and the product
\begin{align*}
(a,u) \cdot (b, v) = (ab, av + ub).
\end{align*}

In the following, we recall the Hochschild cohomology of an involutive associative algebra $A$ with coefficients in an involutive $A$-bimodule $M$.  First consider the ordinary Hochschild cochain complex $\{ C^\bullet_{\mathrm{Hoch}} (A, M), \delta_{\mathrm{Hoch}} \},$ where $C^n_{\mathrm{Hoch}}(A, M) = \mathrm{Hom}(A^{\otimes n}, M)$ for $  n \geq 0$ and the differential $\delta_{\mathrm{Hoch}} : C^n_{\mathrm{Hoch}}(A, M) \rightarrow C^{n+1}_{\mathrm{Hoch}}(A, M)$ given by
\begin{align*}
( \delta_{\mathrm{Hoch}} f ) (a_1, \ldots, a_{n+1}) =~& a_1 f(a_2, \ldots, a_{n+1}) 
+ \sum_{i=1}^n (-1)^i f (a_1, \ldots, a_{i-1}, a_i a_{i+1}, \ldots, a_{n+1}) \\
~&+ (-1)^{n+1} f (a_1, \ldots, a_n) a_{n+1}.
\end{align*}

For $n \geq 0$, consider the collection of subspaces $i C^n_{\mathrm{Hoch}}(A, M) \subset C^n_{\mathrm{Hoch}}(A, M)$ given by $i C^0_{\mathrm{Hoch}}(A, M) = \{ m \in C^0_{\mathrm{Hoch}}(A, M) = M | m^* = - m \} $ and $$i C^n_{\mathrm{Hoch}}(A, M) = \{ f \in C^n_{\mathrm{Hoch}}(A, M) |~ f (a_1, \ldots, a_n )^* = (-1)^{\frac{(n-1)(n-2)}{2}} f(a_n^*, \ldots, a_1^*) \},~ \text{ for } n \geq 1.$$ It has been shown in \cite{das-saha} that $\{ i C^\bullet_\mathrm{Hoch} (A, M), \delta_{\mathrm{Hoch}} \} $ is a subcomplex of the ordinary Hochschild complex and the cohomology of this subcomplex is called the Hochschild cohomology of the involutive algebra $A$ with coefficients in the involutive bimodule $M$. 

Next we show that the classical Gerstenhaber bracket on ordinary Hochschild cochains passes onto the involutive Hochschild cochains. Let us first recall the classical Gerstenhaber bracket \cite{gers-ring}. For $f \in C^m_{\mathrm{Hoch}}(A, A)$ and $g \in C^n_{\mathrm{Hoch}}(A, A)$, the Gerstenhaber bracket $[f,g] \in C^{m+n-1}_{\mathrm{Hoch}}(A,A)$ is given by 
\begin{align}\label{gers-brkt}
&[f,g] = \sum_{i=1}^m (1-)^{(i-1) (n-1)} f \circ_i g - (-1)^{(m-1)(n-1)} \sum_{i=1}^n (-1)^{(i-1)(m-1)} g \circ_i f,~ \text{ where }\\
&( f \circ_i  g)(a_1, \ldots, a_{m+n-1}) = f (a_1, \ldots, a_{i-1}, g( a_i, \ldots, a_{i+n-1}), a_{i+n}, \ldots, a_{m+n-1}). \nonumber
\end{align}

With this notation, we have the following.
\begin{prop}
If $f \in iC^m_{\mathrm{Hoch}}(A, A)$ and $g \in iC^n_{\mathrm{Hoch}}(A, A)$, then $[f,g] \in iC^{m+n-1}_{\mathrm{Hoch}}(A, A).$
\end{prop}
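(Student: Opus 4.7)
The plan is to reduce the statement to a sign computation on the basic composition operation $f \circ_i g$ and then to reindex the resulting sums so that they reassemble into $[f,g]$ evaluated at the reversed inputs. The key intermediate claim I would prove first is:
\begin{align*}
(f \circ_i g)(a_1, \ldots, a_{m+n-1})^{*}
= (-1)^{\frac{(m-1)(m-2)}{2} + \frac{(n-1)(n-2)}{2}}
(f \circ_{m-i+1} g)(a_{m+n-1}^{*}, \ldots, a_{1}^{*}).
\end{align*}
The proof of this is a direct unwinding of the definition of $\circ_i$: applying the involutive condition for $f$ moves $*$ inside and reverses the outer $m$ slots, picking up $(-1)^{\frac{(m-1)(m-2)}{2}}$; then applying the involutive condition for $g$ to the inner entry adds a factor $(-1)^{\frac{(n-1)(n-2)}{2}}$ and reverses the $g$-arguments; a quick check of indices shows the resulting expression is precisely the $(m-i+1)$-th circle product evaluated at the full reversal of the $a_k$'s.

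Next I would plug this identity into the Gerstenhaber bracket formula \eqref{gers-brkt} applied to $[f,g]^{*}$. Writing $C = (-1)^{\frac{(m-1)(m-2)}{2} + \frac{(n-1)(n-2)}{2}}$, one obtains
\begin{align*}
[f,g](a_1,\ldots,a_{m+n-1})^{*}
= C \Bigl[ \sum_{i=1}^{m} (-1)^{(i-1)(n-1)} (f \circ_{m-i+1} g)(a^{*}_{\rm rev})
- (-1)^{(m-1)(n-1)} \sum_{i=1}^{n} (-1)^{(i-1)(m-1)} (g \circ_{n-i+1} f)(a^{*}_{\rm rev}) \Bigr],
\end{align*}
where $a^{*}_{\rm rev} = (a^{*}_{m+n-1},\ldots,a^{*}_1)$. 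Substituting $j = m-i+1$ in the first sum and $j = n-i+1$ in the second, and using $(-1)^{-k} = (-1)^{k}$, I rearrange this into the pattern of $[f,g](a^{*}_{\rm rev})$ with a single overall sign factor out in front.

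Finally, the whole statement reduces to checking one parity identity, namely that the accumulated sign equals $(-1)^{\frac{(m+n-2)(m+n-3)}{2}}$. Concretely, after reindexing, the factor multiplying $[f,g](a^{*}_{\rm rev})$ is $(-1)^{\frac{(m-1)(m-2)}{2} + \frac{(n-1)(n-2)}{2} + (m+1)(n-1)}$, and one must verify
\begin{align*}
\tfrac{(m-1)(m-2)}{2} + \tfrac{(n-1)(n-2)}{2} + (m+1)(n-1)
\equiv \tfrac{(m+n-2)(m+n-3)}{2} \pmod{2}.
\end{align*}
This is a standard triangular-number congruence that I would dispatch by expanding $\binom{m+n-2}{2} = \binom{m-1}{2} + \binom{n-1}{2} + (m-1)(n-1)$ and noting that $(m+1)(n-1) \equiv (m-1)(n-1) \pmod{2}$.

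The main obstacle is purely bookkeeping: keeping the signs from the involution of $f$, the involution of $g$, the Gerstenhaber exponent $(i-1)(n-1)$, and the reindexing all coherent. The heart of the argument is the circle-product identity above; once that is in hand, the rest is reindexing and the parity identity, so I expect no conceptual difficulty, only arithmetic care.
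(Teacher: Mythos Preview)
Your proposal is correct and follows essentially the same route as the paper: first establish the circle-product identity $(f\circ_i g)(\ldots)^* = (-1)^{\frac{(m-1)(m-2)+(n-1)(n-2)}{2}}(f\circ_{m-i+1}g)(a_{m+n-1}^*,\ldots,a_1^*)$, then reindex the bracket sums and reduce to the parity identity. The paper records the overall exponent as $(m-1)(n-1)$ rather than your $(m+1)(n-1)$, but as you note these agree modulo $2$; your explicit verification via $\binom{m+n-2}{2}=\binom{m-1}{2}+\binom{n-1}{2}+(m-1)(n-1)$ is in fact more detailed than what the paper writes down.
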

\begin{proof}
First observe that
\begin{align*}
(f \circ_i  g)(a_1, \ldots, a_{m+n-1})^* = (-1)^{\frac{(m-1)(m-2)+(n-1)(n-2)}{2}} ~(f \circ_{m-i+1} g )(a_{m+n-1}^*, \ldots, a_1^*).
\end{align*}
Hence
\begin{align*}
&(\sum_{i=1}^m (-1)^{(i-1)(n-1)} f \circ_i g ) (a_1, \ldots, a_{m+n-1})^* \\
&= (-1)^{\frac{(m-1)(m-2)+(n-1)(n-2)}{2}} \sum_{i=1}^m (-1)^{(i-1)(n-1)} ~( f \circ_{m-i+1} g ) (a_{m+n-1}^*, \ldots, a_1^*) \\
&= (-1)^{\frac{(m-1)(m-2)+(n-1)(n-2)}{2} + (m-1)(n-1)}  \sum_{i=1}^m (-1)^{(m-i)(n-1)} ~( f \circ_{m-i+1} g ) (a_{m+n-1}^*, \ldots, a_1^*).
\end{align*}
Therefore,
\begin{align*}
[f,g](a_1, \ldots, a_{m+n-1})^* =~& (-1)^{\frac{(m-1)(m-2)+(n-1)(n-2)}{2} + (m-1)(n-1)} ~[f,g] (a_{m+n-1}^*, \ldots, a_1^*) \\
=~& (-1)^{\frac{(m+n-2)(m+n-3)}{2}} ~[f,g] (a_{m+n-1}^*, \ldots, a_1^*).
\end{align*}
This shows that $[f,g] \in iC^{m+n-1}_{\mathrm{Hoch}}(A, A).$
\end{proof}

\subsection{Relative Rota-Baxter operators}

\begin{defn}
Let $A$ be an associative algebra. A linear map $R : A \rightarrow A$ is a Rota-Baxter operator on $A$ if $R$ satisfies
\begin{align}\label{rota-iden}
R(a) R(b) = R (a R(b) + R(a) b ), ~ \text{ for } a, b \in A.
\end{align}
\end{defn}
If $A$ is an involutive associative algebra, then a linear map $R : A \rightarrow A$ is said to be a Rota-Baxter operator on $A$ if $R(a^*) = R(a)^*$ and satisfies (\ref{rota-iden}).

\begin{defn}
Let $A$ be an involutive associative algebra and $M$ be an involutive $A$-bimodule. A linear map $T: M \rightarrow A$ is called a relative Rota-Baxter operator on $A$ with respect to the involutive $A$-bimodule $M$ if $T$ satisfies $T(u^*) = T(u)^*$ and
\begin{align*}
T(u) T(v) = T ( u T(v) + T(u) v),~ \text{ for } u,v \in M.
\end{align*}
\end{defn}
They are also called involutive relative Rota-Baxter operators. Thus, it follows that a Rota-Baxter operator on an involutive associative algebra $A$ is a relative Rota-Baxter operator on $A$ with respect to the involutive bimodule $A$ itself.

\begin{prop}\label{dirac-char}
Let $A$ be an involutive associative algebra and  $M$ be an involutive $A$-bimodule. A linear map $T: M \rightarrow A$ is a relative Rota-Baxter operator on $A$ with respect to the bimodule $M$ if and only if the graph of $T$,
\begin{align*}
\mathrm{Gr}(T) =\{ (Tu,u)| ~u \in M \}
\end{align*}
is an involutive subalgebra of the semi-direct product $A \oplus M$. 
\end{prop}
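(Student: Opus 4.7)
The plan is to directly unravel what it means for $\mathrm{Gr}(T)$ to be an involutive subalgebra of the semi-direct product $A \oplus M$, and observe that the two closure conditions (under the product and under the involution) correspond exactly to the two defining identities of a relative Rota-Baxter operator.

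First I would fix $u, v \in M$ and compute the product in $A \oplus M$ of two arbitrary elements of $\mathrm{Gr}(T)$ using the semi-direct product formula recalled earlier:
\begin{align*}
(Tu, u) \cdot (Tv, v) = \bigl( Tu \cdot Tv,~ Tu \cdot v + u \cdot Tv \bigr).
\end{align*}
This element lies in $\mathrm{Gr}(T)$ if and only if its first component equals $T$ applied to its second component, i.e.
\begin{align*}
Tu \cdot Tv = T\bigl( u \cdot Tv + Tu \cdot v \bigr),
\end{align*}
which is precisely the Rota-Baxter identity. Hence closure of $\mathrm{Gr}(T)$ under the product of $A \oplus M$ is equivalent to $T$ satisfying the Rota-Baxter identity.

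Next I would treat the involution. For $u \in M$, the involution on the semi-direct product gives
\begin{align*}
(Tu, u)^* = \bigl( (Tu)^*,~ u^* \bigr),
\end{align*}
and this belongs to $\mathrm{Gr}(T)$ if and only if its first component equals $T$ applied to its second component, i.e. $(Tu)^* = T(u^*)$. Thus closure of $\mathrm{Gr}(T)$ under $*$ is equivalent to $T$ intertwining the involutions of $M$ and $A$.

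Combining both observations gives the equivalence. Neither direction requires a subtle argument; the entire proof is a direct unpacking of definitions in the semi-direct product. The only point to be careful about is that the two conditions defining a relative Rota-Baxter operator on an involutive algebra (compatibility with $*$ and the Rota-Baxter identity) appear independently — one from multiplicative closure, the other from involutive closure — so I would present the two computations separately and then conclude with a single ``if and only if'' sentence. I do not anticipate any real obstacle here.
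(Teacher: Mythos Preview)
Your proposal is correct and is exactly the natural argument: closure of $\mathrm{Gr}(T)$ under the semi-direct product multiplication is equivalent to the Rota-Baxter identity, and closure under the involution $(a,u)^* = (a^*,u^*)$ is equivalent to $T(u^*) = T(u)^*$. The paper states this proposition without proof, treating it as a routine verification, so your write-up would serve as the omitted argument with nothing to add or change.
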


%\begin{defn}
%Let $A$ be an involutive associative algebra. A linear map $N:A \rightarrow A$ is said to be a Nijenhuis operator on $A$ if $N (a^*) = N(a)^*$ and satisfies
%\begin{align*}
%N(a) N(b) = N (N(a)b + aN(b) - N (ab)),~\text{ for } a, b \in A.
%\end{align*}
%\end{defn}

%\begin{prop}
%A linear map $T: M \rightarrow A$ is an involutive generalized Rota-Baxter operator on $A$ with respect to the bimdoule $M$ if and only if $N_T = (MATRIX) :A \oplus M \rightarrow A \oplus M$ is an involutive Nijenhuis operator on the semi-direct product $A \oplus M$.
%\end{prop}

Let $T$ (resp. $T'$) be a relative Rota-Baxter operator on an involutive associative algebra $A$ with respect to an involutive $A$-bimodule $M$ (resp. on an involutive associative algebra $A'$ with respect to an involutive $A'$-bimodule $M'$).

\begin{defn}\label{mor-defn}
A morphism from $T$ to $T'$ consists of a pair $(\phi, \psi)$ in which $\phi : A \rightarrow A'$ is an involutive algebra morphism and $\psi : M \rightarrow M'$ is a linear map satisfying $\psi (u^*) = \psi (u)^*$ and
\begin{align*}
T' \circ \psi = \phi \circ T, \quad \psi ( au) = \phi (a) \psi (u) ~~~ \text{ and } ~~~ \psi (u a) = \psi (u) \phi (a),
\end{align*} 
for all $a \in A$ and $u \in M$. A morphism $(\phi, \psi)$ is called an isomorphism if $\phi$ and $\psi$ are both linear isomorphisms.
\end{defn}

In \cite{aguiar} Aguiar showed that a (relative) Rota-Baxter operator induces a dendriform structure. Here we observe the corresponding result in the involutive case.

\begin{defn}
A dendriform algebra is a vector space $D$ together with bilinear operations $\prec, \succ : D \otimes D \rightarrow D$ satisfying the following three identities
\begin{align*}
( a \prec b ) \prec c = a \prec ( b \prec c + b \succ c ), ~~~
( a \succ b ) \prec c = a \succ ( b \prec c), ~~~
( a \prec b + a \succ b ) \succ c = a \succ ( b \succ c),
\end{align*}
for all $a , b, c \in D$. A dendriform algebra as above may be denoted by the triple $(D, \prec, \succ)$.
\end{defn}
An involutive dendriform algebra is a dendriform algebra $(D, \prec, \succ)$ together with an involution $* : D \rightarrow D$ that satisfies $(a \prec b)^* = b^* \succ a^*$ (equivalently, $(a \succ b)^* = b^* \prec a^*$), for all $a, b \in D$.

\begin{prop}\label{inv-rota-inv-dend}
Let $T$ be a relative Rota-Baxter operator on an involutive associative algebra $A$ with respect to an involutive $A$-module $M$. Then $M$ carries an involutive dendriform algebra structure with products
\begin{align*}
u \prec v = u T(v) \quad \text{ and } \quad u \succ v = T(u)v, ~ \text{ for } u, v \in M.
\end{align*}
\end{prop}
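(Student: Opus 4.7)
The plan is to split the verification into two independent parts: (i) show that $(M,\prec,\succ)$ is an ordinary dendriform algebra (Aguiar's classical result), and (ii) show that the involution $*:M\to M$ is compatible with $\prec,\succ$ in the sense required by the definition of an involutive dendriform algebra.

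For part (i), I would verify the three dendriform axioms by direct computation, using only the associativity of the product on $A$, the $A$-bimodule axioms on $M$, and the relative Rota-Baxter identity $T(u)T(v) = T(uT(v)+T(u)v)$. For instance, for the first axiom:
\[
(u \prec v)\prec w = (uT(v))T(w) = u\bigl(T(v)T(w)\bigr) = u\,T(vT(w)+T(v)w) = u \prec (v\prec w + v\succ w),
\]
where the middle equality uses the right $A$-module axiom on $M$ and the third uses the Rota-Baxter identity. The second and third axioms follow by the same pattern: rewrite both sides so that every product involves an element of $A$ acting on an element of $M$ via a bimodule action, and then either use associativity of the bimodule ($(a\cdot u)\cdot b = a\cdot(u\cdot b)$, etc.) or apply the Rota-Baxter identity exactly once.

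For part (ii), I would verify $(u\prec v)^* = v^* \succ u^*$ directly from the definitions. Using that $(ua)^* = a^*u^*$ in the involutive bimodule $M$ and that $T(v^*)=T(v)^*$ (the defining involutive property of $T$):
\[
(u\prec v)^* = (u\,T(v))^* = T(v)^*\, u^* = T(v^*)\, u^* = v^* \succ u^*.
\]
The companion identity $(u\succ v)^* = v^* \prec u^*$ is proved symmetrically. This gives $(M,\prec,\succ,*)$ the structure of an involutive dendriform algebra.

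There is no real obstacle in this proof; each axiom reduces to a one-line manipulation. The only thing to be careful about is tracking on which side the bimodule acts, so that one never mistakenly commutes $T(v)$ past $u$ across the bimodule action, and ensuring that the involution-reversal rules $(au)^*=u^*a^*$ and $(ua)^*=a^*u^*$ are applied in the correct orientation when verifying the involutive compatibility.
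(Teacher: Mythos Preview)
Your proposal is correct and is precisely the argument the paper has in mind: the paper states the proposition without proof, having just recalled Aguiar's classical result that a relative Rota-Baxter operator induces an ordinary dendriform structure, and leaves the involutive compatibility $(u\prec v)^* = v^*\succ u^*$ as the evident one-line check using $T(v^*)=T(v)^*$ and $(ua)^* = a^*u^*$. Your explicit verification of both parts fills in exactly what the paper omits.
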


\subsection{Gauge transformations}
Let $A$ be an involutive associative algebra and $M$ be an involutive $A$-bimodule. Let $T: M \rightarrow A$ be a relative Rota-Baxter operator. Consider the involutive subalgebra $\mathrm{Gr}(T) \subset A \oplus M$
%\begin{align*}
%\mathrm{Gr}(T) = \{ (Tu, u)|~ u \in M \}  \subset A \oplus M
%\end{align*}
of the semi-direct product. 

For any involutive $1$-cochain $B \in i C^1_{\mathrm{Hoch}}(A, M)$, we consider the deformed subspace
\begin{align*}
\tau_B ( \mathrm{Gr}(T) ) = \{ (Tu , u + B (Tu) )|~ u \in M \} \subset A \oplus M.
\end{align*}

\begin{lemma}
If $B \in i C^1_{\mathrm{Hoch}}(A, M)$ is an involutive Hochschild $1$-cocycle then
the subspace $\tau_B ( \mathrm{Gr}(T) ) \subset A \oplus M$ is an involutive subalgebra of the semi-direct product $A \oplus M$.
\end{lemma}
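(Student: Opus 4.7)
The plan is to verify two things separately: that $\tau_B(\mathrm{Gr}(T))$ is closed under the multiplication of the semi-direct product, and that it is closed under the involution. Both reductions follow by taking a general pair of elements in $\tau_B(\mathrm{Gr}(T))$ and matching the output with an element of the same form.

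For the subalgebra property, I would take $(Tu, u+B(Tu))$ and $(Tv, v+B(Tv))$ in $\tau_B(\mathrm{Gr}(T))$ and compute their semi-direct product:
\begin{align*}
(Tu, u+B(Tu))\cdot (Tv, v+B(Tv)) = \bigl(T(u)T(v),\; T(u)v + uT(v) + T(u)B(Tv) + B(Tu)T(v)\bigr).
\end{align*}
Using the relative Rota-Baxter identity $T(u)T(v) = T(uT(v)+T(u)v)$, I would set $w := uT(v)+T(u)v$ so that the first slot is $Tw$. To show the result lies in $\tau_B(\mathrm{Gr}(T))$, it remains to identify $T(u)v + uT(v) + T(u)B(Tv) + B(Tu)T(v)$ with $w + B(Tw) = uT(v)+T(u)v + B(T(u)T(v))$. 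This reduces exactly to
\begin{align*}
T(u)\, B(Tv) + B(Tu)\, T(v) = B\bigl(T(u)T(v)\bigr),
\end{align*}
which is precisely $(\delta_{\mathrm{Hoch}} B)(Tu,Tv) = 0$, i.e.\ the fact that $B$ is a Hochschild $1$-cocycle applied to the pair $(Tu, Tv) \in A \times A$.

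For the involution part, I would apply $*$ to a generic element $(Tu, u+B(Tu))$ and use $(Tu)^* = T(u^*)$ (since $T$ is involutive) to get
\begin{align*}
\bigl(Tu,\; u+B(Tu)\bigr)^* = \bigl(T(u^*),\; u^* + B(Tu)^*\bigr).
\end{align*}
The candidate preimage under $\tau_B$ is $u^* \in M$, so I need $B(Tu)^* = B(T(u^*)) = B((Tu)^*)$. This is exactly the involutivity condition for $B \in iC^1_{\mathrm{Hoch}}(A,M)$, which in degree $1$ reads $B(a)^* = (-1)^{0\cdot(-1)/2} B(a^*) = B(a^*)$. Applying it at $a = Tu$ finishes the argument.

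The computation is essentially routine; the only place where care is needed is making sure the sign from the definition of $iC^1_{\mathrm{Hoch}}$ does indeed collapse to $+1$ in degree one, so that the involutivity of $B$ reduces to the simple identity $B\circ * = *\circ B$ used above. Neither step is a genuine obstacle: the subalgebra property encodes exactly the $1$-cocycle equation, and the $*$-closedness encodes exactly the degree-one involutivity condition, so the cocycle and involutivity hypotheses on $B$ are used in a one-to-one correspondence with the two things to be checked.
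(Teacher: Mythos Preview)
Your proof is correct and follows essentially the same route as the paper: compute the semi-direct product of two generic elements, use the $1$-cocycle condition to rewrite $T(u)B(Tv)+B(Tu)T(v)$ as $B(T(u)T(v))$, and invoke the Rota-Baxter identity to recognize the result as an element of $\tau_B(\mathrm{Gr}(T))$. Your treatment of the involutive part is in fact more explicit than the paper's, which simply remarks that the subspace is involutive because $B$ is an involutive $1$-cochain.
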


\begin{proof}
For any $u, v \in M$, we have
\begin{align*}
&(Tu , u + B (Tu) ) \cdot (Tv , v + B (Tv) ) \\
&= \big( T(u) T(v),~ T(u) v + uT(v) + T(u) (B(Tv)) + (B(Tu)) T(v) \big)  \\
&= \big( T(u)T(v) , ~ T(u) v + uT(v) + B (T(u)T(v)) \big) \quad (\text{since } B \text{ is a } 1\text{-cocycle}). 
\end{align*}
This is in $\tau_B ( \mathrm{Gr}(T) )$ as $T$ is a relative Rota-Baxter operator. Finally, this is an involutive subspace as $B$ is an involutive $1$-cochain.
\end{proof}

We now ask the question whether the involutive subalgebra $\tau_B ( \mathrm{Gr}(T) )$ is the graph of a new involutive relative Rota-Baxter operator. We observe that if the linear map $\mathrm{id}_M + B \circ T : M \rightarrow M$ is invertible, then $\tau_B ( \mathrm{Gr}(T) )$ is the graph of a linear map $T \circ (\mathrm{id}_M + B \circ T )^{-1} : M \rightarrow A$. In such a case, by Proposition \ref{dirac-char}, the linear map $T \circ (\mathrm{id}_M + B \circ T )^{-1} $ is a relative Rota-Baxter operator on the involutive algebra $A$ with respect to the involutive bimodule $M$. The relative Rota-Baxter operator $T \circ (\mathrm{id}_M + B \circ T )^{-1} $ is called the gauge transformation of $T$ associated with $B$.

\subsection{Maurer-Cartan characterization and cohomology}
In this subsection, we first recall from \cite{das-rota} that ordinary relative Rota-Baxter operators are Maurer-Cartan elements in a suitable graded Lie algebra $\mathfrak{g}$. Then we will show that involutive relative Rota-Baxter operators are Maurer-Cartan elements in a suitable graded Lie subalgebra of $\mathfrak{g}$.

Let $A$ be an ordinary associative algebra with product $\mu$ and $M$ be an $A$-bimodule with left and right $A$ actions $l, r$. Then the graded space $\oplus_{n \geq 0} \mathrm{Hom}(M^{\otimes n}, A)$ carries a graded Lie bracket defined by Voronov's derived bracket
\begin{align}\label{deri-br}
\llbracket P, Q \rrbracket := (-1)^m [ [ \mu + l + r , P ], Q ],
\end{align}
for $P \in \mathrm{Hom}(M^{\otimes m}, A), Q \in \mathrm{Hom}(M^{\otimes n}, A)$. Here $\mu + l + r$ can be considered as an element in $\mathrm{Hom}( (A \oplus M)^{\otimes 2}, A \oplus M)$. Similarly, $P$ can be considered as element in  $\mathrm{Hom}((A \oplus M)^{\otimes m}, A \oplus M)$ and same for $Q$. Finally, the bracket $[~,~]$ on the right hand side of (\ref{deri-br}) is the Gerstenhaber's bracket (\ref{gers-brkt}) on multilinear maps on the vector space $A \oplus M$. Explicitly, the bracket (\ref{deri-br}) is given by
\begin{align}\label{derived-bracket}
&\llbracket P, Q \rrbracket  (u_1, \ldots, u_{m+n}) \\
&= \sum_{i=1}^m (-1)^{(i-1) n} P (u_1, \ldots, u_{i-1}, Q (u_i, \ldots, u_{i+n-1}) u_{i+n}, \ldots, u_{m+n} ) \nonumber\\
~&- \sum_{i=1}^m (-1)^{in} P (u_1, \ldots, u_{i-1}, u_i Q (u_{i+1}, \ldots, u_{i+n}), u_{i+n+1}, \ldots, u_{m+n}) \nonumber\\
&- (-1)^{mn} \bigg\{ \sum_{i=1}^n (-1)^{(i-1)m}~ Q (u_1, \ldots, u_{i-1}, P (u_i, \ldots, u_{i+m-1}) u_{i+m}, \ldots, u_{m+n}) \nonumber\\
&- \sum_{i=1}^n (-1)^{im}~ Q (u_1, \ldots, u_{i-1}, u_i P (u_{i+1}, \ldots, u_{i+m}) , u_{i+m+1}, \ldots, u_{m+n} ) \bigg\} \nonumber\\
&+ (-1)^{mn} \big[ P(u_1, \ldots, u_m) Q (u_{m+1}, \ldots, u_{m+n} ) - (-1)^{mn}~ Q (u_1, \ldots, u_n) P ( u_{n+1}, \ldots, u_{m+n} ) \big], \nonumber
\end{align}
\begin{align*}
&\llbracket P, a \rrbracket (u_1, \ldots, u_m ) \\&= \sum_{i=1}^m P (u_1, \ldots, u_{i-1}, a u_i - u_i a , u_{i+1}, \ldots, u_{m} ) 
+ P (u_1, \ldots, u_m) a - a  P (u_1, \ldots, u_m), \nonumber \\
& \text{ and } \llbracket a, b \rrbracket = ab - ba,
\end{align*}
for $P \in \mathrm{Hom}(M^{\otimes m}, A), ~ Q \in \mathrm{Hom}(M^{\otimes n}, A),$ $a, b \in A$ and $u_1, \ldots, u_{m+n} \in M.$

It is easy from the above bracket that a linear map $T \in \mathrm{Hom}(M, A)$ is an ordinary relative Rota-Baxter operator on $A$ with respect to the $A$-bimodule $M$ if and only if $T$ is a Maurer-Cartan element in the above-graded Lie algebra. The cohomology induced from the Maurer-Cartan element $T$ is called the cohomology of the relative Rota-Baxter operator $T$, and they are denoted by $H^\bullet_T (M, A)$.

Next, let $A$ be an involutive associative algebra and $M$ be an involutive $A$-bimodule. Consider the graded space of involutive multilinear maps $\oplus_{n \geq 0} i \mathrm{Hom}(M^{\otimes n}, A)$, where $i \mathrm{Hom}(M^{\otimes 0}, A) = iA = \{ a \in A|a^* = -a \}$ and for $n \geq 1$,
\begin{align*}
i\mathrm{Hom}(M^{\otimes n}, A) = \{ f \in \mathrm{Hom}(M^{\otimes n}, A) | ~ f (u_1, \ldots, u_n )^* = (-1)^{\frac{(n-1)(n-2)}{2}} f (u_n^*, \ldots, u_1^*) \}.
\end{align*}

Since involutive multilinear maps are closed under the Gerstenhaber's bracket, it follows that the bracket (\ref{deri-br}) restricts to the graded subspace $\oplus_{n \geq 0} i \mathrm{Hom}(M^{\otimes n}, A)$ by the same formula as (\ref{derived-bracket}). It follows that a linear map $T : M \rightarrow A$ is a involutive relative Rota-Baxter operator if and only if $T \in i \mathrm{Hom}(M,A)$ is a Maurer-Cartan element in the graded Lie algebra $( \oplus_{n \geq 0} i \mathrm{Hom}(M^{\otimes n }, A), \llbracket ~, ~ \rrbracket )$.

Thus, an involutive relative Rota-Baxter operator $T$ induces a degree $1$ differential $d_T = \llbracket T, ~ \rrbracket$ on the graded space $\oplus_{n \geq 0} i \mathrm{Hom}(M^{\otimes n }, A)$. The corresponding cohomology groups are called the cohomology of the involutive relative Rota-Baxter operator $T$, and they are denoted by $iH^\bullet_T (M, A)$.

\section{Some properties of the cohomology}\label{sec-coho}
In this section, we first show that the cohomology of an involutive relative Rota-Baxter operator can be seen as the Hochschild cohomology of an involutive associative algebra. We also obtain a splitting theorem of the ordinary cohomology of a relative Rota-Baxter operator on an involutive associative algebra. Finally, we relate the cohomology of an involutive relative Rota-Baxter operator to the cohomology of the corresponding involutive dendriform algebra.

\subsection{Cohomology as involutive Hochschild cohomology}
Let $T: M \rightarrow A$ be a relative Rota-Baxter operator on an involutive associative algebra $A$ with respect to the involutive $A$-bimodule $M$. Then by Proposition \ref{inv-rota-inv-dend}, $M$ carries an involutive dendriform algebra structure. Hence $M$ has an involutive associative algebra structure with product
\begin{align*}
u \circledast v = u T(v) + T(u)v,~ \text{ for } u,v\in M.
\end{align*}
The following lemma is a generalization of \cite{uchino} in the involutive context.

\begin{lemma}
Let $T: M \rightarrow A$ be a relative Rota-Baxter operator on an involutive associative algebra $A$ with respect to the involutive $A$-bimodule $M$. Then the maps
\begin{align*}
l_T : M \otimes A \rightarrow A,~ (u,a) \mapsto T(u) a - T(ua),\\
r_T : A \otimes M \rightarrow A,~ (a,u) \mapsto a T(u) - T(au)
\end{align*}
defines an involutive $M$-bimodule structure on $A$.
\end{lemma}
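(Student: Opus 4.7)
The plan is to split the proof into two pieces: first verify that $l_T$ and $r_T$ give $A$ the structure of an ordinary $M$-bimodule (with respect to the associative product $\circledast$ on $M$), and then check that this bimodule structure is compatible with the involutions on $M$ and $A$.

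For the ordinary bimodule structure I have to establish three identities: the left action axiom $l_T(u\circledast v, a) = l_T(u, l_T(v,a))$, the right action axiom $r_T(a, u\circledast v) = r_T(r_T(a,u), v)$, and the mixed compatibility $l_T(u, r_T(a,v)) = r_T(l_T(u,a), v)$. For instance, to check the left action axiom, I would expand
\begin{align*}
l_T(u\circledast v, a) &= T(uT(v)+T(u)v)\,a - T((uT(v)+T(u)v)\,a),\\
l_T(u, l_T(v,a)) &= T(u)(T(v)a - T(va)) - T(u(T(v)a - T(va))),
\end{align*}
and then use $T(u)T(v) = T(uT(v)+T(u)v)$ together with the bimodule axioms for $M$ to match them term by term. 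The right action and mixed axioms are entirely analogous, each reducing to a single application of the Rota-Baxter identity after expansion. This is the computational bulk of the proof, but the pattern is the standard one from the non-involutive case (as in Uchino's work cited in the excerpt), so no new idea is required.

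For the involutive compatibility I need to verify $(l_T(u,a))^* = r_T(a^*, u^*)$ and $(r_T(a,u))^* = l_T(u^*, a^*)$. Using that $T$ commutes with $*$, that $(au)^* = u^*a^*$, $(ua)^* = a^*u^*$ in the bimodule $M$, and $(ab)^* = b^*a^*$ in $A$, I get
\begin{align*}
(l_T(u,a))^* = (T(u)a - T(ua))^* = a^*T(u^*) - T(a^*u^*) = r_T(a^*, u^*),
\end{align*}
and symmetrically for $r_T$. This step is essentially a bookkeeping check.

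I expect the only nontrivial obstacle to be the clean organization of the first step, where one must keep track of several cancellations between terms of the form $T(\cdots) T(\cdots)$ and $T(T(\cdots)\cdots + \cdots T(\cdots))$; using the $\circledast$ notation throughout and invoking the Rota-Baxter identity in its symmetric form $T(u)T(v) = T(u\circledast v)$ should keep the computation transparent. Everything involving the involutions is formal.
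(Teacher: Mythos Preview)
Your proposal is correct and follows essentially the same approach as the paper: the paper cites Uchino for the ordinary $M$-bimodule structure (which you propose to verify directly, but correctly identify as the standard non-involutive computation) and then checks the involutive compatibility $l_T(u,a)^* = r_T(a^*,u^*)$ exactly as you do. Your computation for the involutive step matches the paper's line for line.
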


\begin{proof}
In \cite{uchino} it has been proved that the maps $l_T$ and $r_T$ define an $M$-bimodule structure on $A$. Thus we need to verify the compatibility of involution and the maps $l_T, r_T$. We have
\begin{align*}
l_T (u, a)^* = a^* T(u)^* - T((ua)^*) = a^* T(u^*) - T(a^* u^*) = r_T ( a^*, u^*).
\end{align*}
Similarly, $r_T (a,u)^* = l_T (u^*, a^*)$. Hence the proof.
\end{proof}

It follows from the above lemma that we may consider the Hochschild cochain complex of the involutive associative algebra $M$ with coefficients in the involutive $M$-bimodule $A$. More precisely, we consider the cochain complex $\{ iC^\bullet_{\mathrm{Hoch}} (M,A), \delta^T_{\mathrm{Hoch}} \}$, where $iC^0_{\mathrm{Hoch}}(M,A) = \{ a \in A | a^* = -a \}$ and
\begin{align*}
iC^n_{\mathrm{Hoch}}(M,A) = \{ f : M^{\otimes n} \rightarrow A |~ f (u_1, \ldots, u_n )^* = (-1)^{\frac{(n-1)(n-2)}{2}} f (u_n^*, \ldots, u_1^*) \},~ \text{ for } n \geq 1
\end{align*}
and the differential $\delta^T_{\mathrm{Hoch}} : iC^n_{\mathrm{Hoch}}(M,A) \rightarrow iC^{n+1}_{\mathrm{Hoch}}(M,A)$ given by
\begin{align}\label{delta-ho}
( \delta^T_{\mathrm{Hoch}} f ) (u_1, \ldots, u_{n+1}) =~& l_T (u_1, f(u_2, \ldots, u_{n+1}) )
+ \sum_{i=1}^n (-1)^i f (u_1, \ldots, u_{i-1}, u_i \circledast u_{i+1}, \ldots, u_{n+1}) \\
~&+ (-1)^{n+1}~ r_T (f (u_1, \ldots, u_n), u_{n+1}). \nonumber
\end{align}
It has been shown in \cite{das-rota} that the coboundary operator $d_T$ induced from the Maurer-Cartan element $T$ and the coboundary operator (\ref{delta-ho}) are related by
\begin{align*}
d_T f = (-1)^n \delta^T_{\mathrm{Hoch}} f,~\text{ for }f \in iC^n_{\mathrm{Hoch}}(M,A) = i\mathrm{Hom}(M^{\otimes n}, A).
\end{align*}

Hence we get that the cohomology of the involutive relative Rota-Baxter operator $T$ is isomorphic to the Hochschild cohomology of the involutive associative algebra $M$ with coefficients in the involutive $M$-bimodule $A$.

\subsection{Splitting theorem}
In \cite{braun} Braun has shown that for involutive associative algebras, the ordinary Hochschild cohomology splits as a direct sum of involutive Hochschild cohomology and a skew-factor. This splitting theorem has been explicitly described in a recent paper by the present author \cite{das-saha} and further extended it to the dendriform context. Here we conclude a similar result for relative Rota-Baxter operators.

Let $T: M \rightarrow A$ be a relative Rota-Baxter operator on an involutive associative algebra $A$ with respect to an involutive $A$-bimodule $M$. 
%Then it has been shown in \cite{das-rota} that the cohomology of $T$ is the involutive generalized Rota-Baxter operator $T$ is isomorphic to the involutive Hochschild cohomology of $M$ with coefficients of $M$ with coefficients in the involutive $M$-bimodule $A$. Therefore, we have the following.
%\begin{prop}
%For an involutive generalized Rota-Baxter operator $T$, the ordinary cohomology of $T$ splits as a direct sum $H^\bullet_T (M, A) \cong iH^\bullet_T (M, A) \oplus i_{-} H^\bullet_T (M, A).$
%\end{prop}
%Let us explicitly recall the cohomology $ i_{-} H^\bullet_T (M, A)$ and the isomorphism.
For each $n \geq 0$, consider a linear map $S_n : \mathrm{Hom}( M^{\otimes n }, A) \rightarrow  \mathrm{Hom}( M^{\otimes n }, A)$ by
\begin{align*}
S_0 (a) = - a^* ~~~\text{ and } ~~~(S_n P) (a_1, \ldots, a_n ) = (-1)^{\frac{(n-1)(n-2)}{2}} P (a_n^*, \ldots, a_1^*)^*,~ \text{ for } n \geq 1.
\end{align*}
Then we have $(S_n)^2 = \mathrm{id}.$ Therefore, the map $S_n$ has eigenvalues $\pm 1$. Observe that the eigenspace corresponding to the eigenvalue $+1$ is precisely given by $i \mathrm{Hom} (M^{\otimes n},A)$. Denote the eigenspace corresponding to the eigenvalue $-1$ by $i_{-} \mathrm{Hom} (M^{\otimes n},A)$. Then we have
\begin{align}\label{split-iso}
\mathrm{Hom} (M^{\otimes n},A) \cong  i \mathrm{Hom} (M^{\otimes n},A) \oplus i_{-} \mathrm{Hom} (M^{\otimes n},A), ~ f \mapsto \bigg( \frac{f + S_n f}{2}, \frac{f-S_n f}{2} \bigg).
\end{align}
It is easy to verify that $\{ i_{-} \mathrm{Hom} (M^{\otimes \bullet},A) , d_T \}$ is a subcomplex of the complex $\{ \mathrm{Hom} (M^{\otimes \bullet},A), d_T \}$. We denote the corresponding cohomology groups by $i_{-}H^\bullet_T (M, A)$.
Note that the isomorphisms (\ref{split-iso}) preserve the corresponding differentials on both sides. Hence we get the following.
\begin{prop}
For an involutive relative Rota-Baxter operator $T$, the ordinary cohomology of $T$ splits as a direct sum $H^\bullet_T (M, A) \cong iH^\bullet_T (M, A) \oplus i_{-} H^\bullet_T (M, A).$
\end{prop}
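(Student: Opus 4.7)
The plan is to promote the cochain-level decomposition in (\ref{split-iso}) to a decomposition of chain complexes, and then take cohomology. Everything reduces to verifying that the differential $d_T$ commutes with the involution operators $S_n$; once that is in hand, the direct-sum decomposition passes to cohomology automatically.

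First I would establish the key intertwining relation
\[
S_{n+1} \circ d_T = d_T \circ S_n, \qquad n \geq 0.
\]
Since $T$ is an involutive relative Rota-Baxter operator, we have $T \in i\mathrm{Hom}(M, A)$, i.e.\ $S_1 T = T$. The differential is $d_T = \llbracket T, - \rrbracket$, which via Voronov's derived bracket is built from the Gerstenhaber bracket with $\mu + l + r$ on $A \oplus M$. Because $A$ is an involutive algebra and $M$ is an involutive bimodule, the element $\mu + l + r$ is itself involutive on $A \oplus M$. Hence the same computation that was carried out in the proposition on $[f,g]$ earlier, suitably adapted to the bracket $\llbracket -, - \rrbracket$, shows that the bracket of two involutive cochains is involutive, while the bracket of an involutive cochain with a $(-1)$-eigencochain is again a $(-1)$-eigencochain. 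In other words, $\llbracket T, - \rrbracket$ commutes with the $S_\bullet$ action, which is exactly the desired intertwining.

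With this in place, the eigenspaces of $S_\bullet$ are preserved by $d_T$: the $(+1)$-eigenspace $i\mathrm{Hom}(M^{\otimes \bullet}, A)$ is a subcomplex (this is the involutive cohomology complex of $T$), and the $(-1)$-eigenspace $i_{-}\mathrm{Hom}(M^{\otimes \bullet}, A)$ is also a subcomplex, as already noted in the excerpt. The explicit cochain-level map
\[
f \ \longmapsto \ \Big( \tfrac{f + S_n f}{2}, \ \tfrac{f - S_n f}{2} \Big)
\]
is then a chain isomorphism from $(\mathrm{Hom}(M^{\otimes \bullet}, A), d_T)$ to the direct sum $(i\mathrm{Hom}(M^{\otimes \bullet}, A), d_T) \oplus (i_{-}\mathrm{Hom}(M^{\otimes \bullet}, A), d_T)$, because $d_T$ commutes with $S_\bullet$, so the projectors $\tfrac12(\mathrm{id} \pm S_\bullet)$ are chain maps.

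Taking cohomology of both sides yields the claimed splitting
\[
H^\bullet_T(M, A) \cong iH^\bullet_T(M, A) \oplus i_{-}H^\bullet_T(M, A).
\]
The only non-bookkeeping step is the verification of $S_{n+1} \circ d_T = d_T \circ S_n$, and I expect that to be the main (though not deep) obstacle: one must carefully track the signs appearing in each summand of the formula (\ref{derived-bracket}) for $\llbracket T, f \rrbracket$ under the substitution $u_i \mapsto u_{n+1-i}^*$ followed by $*$ on the output, and combine them with the sign $(-1)^{(n-1)(n-2)/2}$, exactly in parallel with the computation already performed for the Gerstenhaber bracket in the earlier proposition.
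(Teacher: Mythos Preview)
Your proposal is correct and follows essentially the same route as the paper: the paper sets up the eigenspace decomposition (\ref{split-iso}), asserts that both eigenspaces are $d_T$-stable (equivalently, that the projectors $\tfrac12(\mathrm{id}\pm S_\bullet)$ are chain maps), and then passes to cohomology. Your write-up is more explicit in identifying the intertwining relation $S_{n+1}\circ d_T = d_T\circ S_n$ as the content of that assertion and in indicating how it follows from the derived-bracket description of $d_T$ together with the involutivity of $T$ and of $\mu+l+r$, but the argument is the same.
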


\subsection{Relation with the cohomology of involutive dendriform algebras}
The cohomology of dendriform algebras was first defined by Loday \cite{loday} with trivial coefficients and the operadic approach was given in \cite{loday-val}. An explicit description of the cohomology was given in \cite{das-dend}. Here we require the cohomology of involutive dendriform algebras given in \cite{das-saha}.

%Next, we first recall the cohomology of involutive dendriform algebras \cite{das-saha}. 
Let $C_n$ be the set of first $n$ natural numbers. For convenience, we denote the elements of $C_n$ by $\{[1], [2], \ldots, [n] \}.$ It has been shown in \cite{das-dend} that for any vector space $D$, the collection of spaces
\begin{align*}
\mathcal{O}(n) = \mathrm{Hom}( \mathbb{K}[C_n] \otimes D^{\otimes n}, D), ~ \text{ for } n \geq 1
\end{align*}
forms a non-symmetric operad with partial compositions

\medskip

\noindent $(f \circ_i g)([r]; a_1, \ldots, a_{m+n-1}) =$
\begin{align*}
\begin{cases}
f ([r]; a_1, \ldots, a_{i-1}, g ([1]+ \cdots + [n]; a_i, \ldots, a_{i+n-1}), \ldots, a_{m+n-1}) ~& \text{ if } 1 \leq r \leq i-1 \\
f ([i]; a_1, \ldots, a_{i-1}, g ([r-i+1]; a_i, \ldots, a_{i+n-1}), \ldots, a_{m+n-1}) ~& \text{ if } i \leq r \leq i+ n-1\\
f ([r-n+1]; a_1, \ldots, a_{i-1}, g ([1]+ \cdots + [n]; a_i, \ldots, a_{i+n-1}), \ldots, a_{m+n-1}) ~& \text{ if } i+ n \leq r \leq m + n -1,
\end{cases}
\end{align*}
for $f \in \mathcal{O}(m), ~ g \in \mathcal{O}(n),~ 1 \leq i \leq m$ and $[r] \in C_{m+n-1}$. Therefore, there is a graded Lie bracket on the graded vector space $\mathcal{O}(\bullet +1 ) = \oplus_{n \geq 0} \mathcal{O}(n+1)$ given by
\begin{align*}
\llceil f, g \rrceil = \sum_{i=1}^{m+1} (-1)^{(i-1)n} f \circ_i g ~-~ (-1)^{mn} \sum_{i=1}^{n+1} (-1)^{(i-1)m} g \circ_i f,
\end{align*}
for $f \in \mathcal{O}(m+1)$ and $g \in \mathcal{O}(n+1)$. More generally, if $(D, \prec, \succ)$ is a dendriform algebra, then the element $\pi \in \mathcal{O}(2)$ defined by
\begin{align*}
\pi ([1]; a, b ) = a \prec b ~~~~ \text{ and } ~~~~ \pi ([2]; a, b ) = a \succ b
\end{align*}
satisfies $\llceil \pi, \pi \rrceil = 0$, i.e. $\pi$ defines a Maurer-Cartan element in the above graded Lie algebra. Hence $\pi$ induces a differential $\delta_\pi : \mathcal{O}(n) \rightarrow \mathcal{O}(n+1)$ given by $\delta_\pi (f) := (-1)^{n-1} \llceil \pi, f \rrceil$, for $f \in \mathcal{O}(n)$. 

Let $(D, \prec, \succ, *)$ be an involutive dendriform algebra. We define 
\begin{align*}
iC^n_{\mathrm{dend}} (D, D) = \{ f \in \mathcal{O}(n) |~ f ([r]; a_1, \ldots, a_n )^* = (-1)^{\frac{(n-1)(n-2)}{2}} f([n-r+1]; a_n^*, \ldots, a_1^*) \}, ~ \text{ for } n \geq 1.
\end{align*}
Then it has been shown in \cite{das-saha} that $\{ iC^\bullet_\mathrm{dend} (D, D) , \delta_\pi \}$ is a subcomplex of the cochain complex $\{ \mathcal{O}(\bullet), \delta_\pi \}$.
The cohomology groups of this subcomplex are called the cohomology of the involutive dendriform algebra $(D, \prec, \succ, *)$ and they are denoted by $iH^\bullet_\mathrm{dend}(D, D)$. \\

Let $T$ be a relative Rota-Baxter operator on an involutive associative algebra $A$ with respect to an involutive $A$-bimodule $M$. Consider the involutive dendriform algebra structure on $M$. We denote by $\pi_T \in iC^2_{\mathrm{dend}}(M,M)$ the corresponding Maurer-Cartan element. Define a collection of maps $\Theta_n : i\mathrm{Hom}(M^{\otimes n}, A) \rightarrow iC^{n+1}_{\mathrm{dend}} (M,M)$ by
\begin{align*}
\Theta_n (P ) ([r]; u_1, u_2, \ldots, u_{n+1}) = \begin{cases} (-1)^{n+1} ~u_1 P(u_2, \ldots u_{n+1})  ~~~& \text{if }~ r = 1 \\
0 ~~~& \text{if } ~2 \leq r \leq n \\
P(u_1, \ldots, u_n ) u_{n+1} ~~~& \text{if }~ r = n + 1. \end{cases}
\end{align*}
Note that $\Theta_n (P) \in iC^{n+1}_{\mathrm{dend}}(M,M)$ as
\begin{align*}
\Theta_n (P)([1]; u_1, \ldots, u_{n+1})^* = (-1)^{n+1} P(u_2, \ldots, u_{n+1})^* u_1^* =~& (-1)^{n+1} (-1)^{\frac{(n-1)(n-2)}{2}} ~P(u_{n+1}^*, \ldots, u_2^*) u_1^* \\
=~& (-1)^{\frac{n(n-1)}{2}} P(u_{n+1}^*, \ldots, u_2^*) u_1^* \\
=~& (-1)^{\frac{n(n-1)}{2}} \Theta_n (P)([n+1]; u_{n+1}^*, \ldots, u_1^*).
\end{align*}
For $2 \leq r \leq n$, we have $\Theta_n (P)([r]; u_1, \ldots, u_{n+1})^* = 0 = \Theta_n (P) ([n-r+2];  u_{n+1}^*, \ldots, u_1^*).$ 

With these notations, we have the following \cite[Lemma 3.4]{das-rota}.

\begin{lemma}
The collection $ \{ \Theta_n \}$ of maps preserve the corresponding graded Lie brackets, i.e.
\begin{align*}
\llceil \Theta_m (P), \Theta_n (Q) \rrceil = \Theta_{m+n} (\llbracket P, Q \rrbracket).
\end{align*}
\end{lemma}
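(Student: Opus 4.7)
The plan is to reduce the identity to its non-involutive counterpart [Lemma 3.4]{das-rota}, rather than to redo the lengthy multilinear bookkeeping from scratch. Two structural facts make this reduction essentially automatic, once they are confirmed.

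First I would observe that the graded Lie bracket $\llbracket\,,\,\rrbracket$ on $\oplus_{n\geq 0} i\mathrm{Hom}(M^{\otimes n}, A)$ is, by construction, the restriction of the derived bracket (\ref{deri-br}) on $\oplus_{n\geq 0}\mathrm{Hom}(M^{\otimes n}, A)$; the excerpt points this out just before introducing $iH^\bullet_T(M,A)$. Similarly, the bracket $\llceil\,,\,\rrceil$ on $iC^\bullet_{\mathrm{dend}}(M,M)$ is the restriction of the graded Lie bracket on $\mathcal{O}(\bullet+1)$, which follows from the subcomplex statement recalled from [das-saha] together with the fact that the operadic compositions $\circ_i$ preserve the involutive symmetry (this is a direct check on the defining formula for $\circ_i$ and is the same mechanism that puts $[f,g]$ into $iC^{m+n-1}_{\mathrm{Hoch}}(A,A)$ in the proposition on Gerstenhaber's bracket).

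Second I would note that the formula defining $\Theta_n$ makes perfect sense for an arbitrary $P\in\mathrm{Hom}(M^{\otimes n},A)$, producing an element of $\mathcal{O}(n+1)$, and that this ambient map $\widetilde\Theta_n:\mathrm{Hom}(M^{\otimes n},A)\to\mathcal{O}(n+1)$ is precisely the one used in [das-rota]. The computation immediately preceding the lemma verifies $\widetilde\Theta_n(P)\in iC^{n+1}_{\mathrm{dend}}(M,M)$ whenever $P\in i\mathrm{Hom}(M^{\otimes n}, A)$, so our $\Theta_n$ is just the restriction of $\widetilde\Theta_n$ to involutive cochains.

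Combining these two observations, the identity $\llceil\Theta_m(P),\Theta_n(Q)\rrceil=\Theta_{m+n}(\llbracket P,Q\rrbracket)$ is nothing but the restriction of the identity $\llceil\widetilde\Theta_m(P),\widetilde\Theta_n(Q)\rrceil=\widetilde\Theta_{m+n}(\llbracket P,Q\rrbracket)$ to the involutive subspaces, and the latter is exactly [Lemma 3.4]{das-rota}. Thus the proof is complete. The only place where something could go wrong is the compatibility of the operadic compositions $\circ_i$ with the involutive symmetry on $\mathcal{O}(n)$, and this is where I would spend the bulk of any written verification: one checks that for $f\in iC^{m+1}_{\mathrm{dend}}$ and $g\in iC^{n+1}_{\mathrm{dend}}$, applying the involution to $(f\circ_i g)([r];a_1,\ldots,a_{m+n})$ produces exactly $(-1)^{\frac{(m+n-1)(m+n-2)}{2}}(f\circ_{m+1-i+1}g)([m+n-r+1];a_{m+n}^*,\ldots,a_1^*)$ up to the same overall sign that appears for the Gerstenhaber bracket on Hochschild cochains, so that $\llceil f,g\rrceil$ remains involutive. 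This is a sign-tracking exercise identical in spirit to the proof of the earlier proposition on $[f,g]$ and contains no genuine new difficulty.
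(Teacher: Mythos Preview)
Your proposal is correct and follows exactly the paper's approach: the paper does not give an independent proof but simply cites \cite[Lemma 3.4]{das-rota}, and your argument spells out why that citation suffices, namely that both brackets and the maps $\Theta_n$ are restrictions of their non-involutive counterparts. The additional verification you sketch (that the operadic compositions $\circ_i$ respect the involutive symmetry) is precisely the ingredient implicit in the paper's appeal to \cite{das-saha} for the subcomplex property of $iC^\bullet_{\mathrm{dend}}$.
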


Hence as a consequence, we get the following.

\begin{prop}
Let $T$ be a relative Rota-Baxter operator on an involutive associative algebra $A$ with respect to the involutive $A$-bimodule $M$. Then the collection $\{ \Theta_n \}$ of maps induces a morphism $\Theta_* : iH^\bullet_T (M, A) \rightarrow iH^{\bullet + 1}_{\mathrm{dend}} (M,M)$ from the cohomology of $T$ to the cohomology of the involutive dendriform algebra structure on $M$.
\end{prop}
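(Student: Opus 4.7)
The plan is to deduce the proposition as a direct consequence of the bracket-preserving lemma just stated, by identifying the two differentials (the Rota-Baxter differential $d_T$ and the dendriform differential $\delta_{\pi_T}$) as derived brackets with distinguished elements that are themselves related through the family $\{\Theta_n\}$.

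First I would verify the key identification $\pi_T = \Theta_1(T)$. Unpacking the definition, $\Theta_1(T)([1];u_1,u_2) = u_1 T(u_2) = u_1 \prec u_2$ and $\Theta_1(T)([2];u_1,u_2) = T(u_1) u_2 = u_1 \succ u_2$, which matches $\pi_T$ coming from the induced involutive dendriform structure on $M$ of Proposition \ref{inv-rota-inv-dend}. This puts both sides of the story on the same footing, since $T$ is the Maurer-Cartan element in $\bigl(\oplus i\mathrm{Hom}(M^{\otimes n},A),\llbracket-,-\rrbracket\bigr)$ and $\Theta_1(T) = \pi_T$ is the corresponding Maurer-Cartan element in $\bigl(iC^\bullet_{\mathrm{dend}}(M,M),\llceil-,-\rrceil\bigr)$.

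Next I would apply the preceding lemma with first argument $T$: for $P \in i\mathrm{Hom}(M^{\otimes n},A)$,
\begin{align*}
\llceil \pi_T, \Theta_n(P) \rrceil \;=\; \llceil \Theta_1(T), \Theta_n(P) \rrceil \;=\; \Theta_{n+1}\bigl(\llbracket T, P\rrbracket\bigr) \;=\; \Theta_{n+1}(d_T P).
\end{align*}
Recalling that the dendriform differential satisfies $\delta_{\pi_T}(f) = (-1)^{k-1}\llceil \pi_T, f\rrceil$ for $f \in \mathcal{O}(k)$, and that $\Theta_n(P) \in iC^{n+1}_{\mathrm{dend}}(M,M)$, this gives
\begin{align*}
\delta_{\pi_T}\bigl(\Theta_n(P)\bigr) \;=\; (-1)^n\, \Theta_{n+1}(d_T P).
\end{align*}
Up to the overall sign $(-1)^n$, which does not affect cocycles or coboundaries, this is precisely the chain map condition. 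Hence $\{\Theta_n\}$ descends to a well-defined morphism $\Theta_* : iH^\bullet_T(M,A) \rightarrow iH^{\bullet+1}_{\mathrm{dend}}(M,M)$.

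Since the technical content is already packaged in the bracket-compatibility lemma, I do not anticipate a genuine obstacle. The only point requiring mild care is sign bookkeeping in matching the two differential conventions, and verifying that $\Theta_n$ indeed lands in the involutive subcomplex (a check essentially already performed in the display immediately preceding the lemma). Once these are in place, the proposition follows in one line.
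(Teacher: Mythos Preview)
Your proposal is correct and follows exactly the route the paper intends: the proposition is stated as an immediate consequence of the bracket-preserving lemma, and your argument spells out precisely that consequence by identifying $\pi_T = \Theta_1(T)$ and comparing the two Maurer-Cartan differentials. The sign discrepancy you note is handled correctly, since it does not obstruct $\{\Theta_n\}$ from sending cocycles to cocycles and coboundaries to coboundaries.
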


\section{Deformations}\label{sec-def}
In this section, we study formal deformations of relative Rota-Baxter operators on involutive associative algebras from cohomological perspectives.

Let $A$ be an involutive associative algebra and $M$ be an involutive $A$-bimodule. Consider the space $A[[t]]$ of formal power series in $t$ with coefficients from $A$. The involution on $A$ induces an involution on $A[[t]]$ and the associative multiplication on $A$ induces an associative multiplication on $A[[t]]$ by $\mathbb{K}[[t]]$-bilinearity. With these structures, $A[[t]]$ is an involutive associative algebra. Moreover, the space $M[[t]]$ can be given the structure of an involutive $A[[t]]$-bimodule with the obvious left and right actions.

\begin{defn}
Let $T: M \rightarrow A$ be a relative Rota-Baxter operator on the involutive algebra $A$ with respect to the involutive $A$-bimodule $M$. A formal one-parameter deformation of $T$ consists of a formal sum
\begin{align*}
T_t = T_0 + tT_1 + t^2 T_2  + \cdots \in \mathrm{Hom}(M,A)[[t]]
\end{align*}
in which $T_0 = T$ such that as a $\mathbb{K}[[t]]$-linear map $T_t : M[[t]] \rightarrow A[[t]]$ is a relative Rota-Baxter operator on the involutive algebra $A[[t]]$ with respect to the involutive $A[[t]]$-bimodule $M[[t]]$.
\end{defn}

Thus, the followings are hold: $T_t (u^*) = T_t (u)^*$ and 
\begin{align*}
T_t (u) T_t (v) = T_t ( u T_t (v) + T_t (u) v ), ~\text{ for } u, v \in M.
\end{align*}
These conditions are equivalent to the followings: for each $k \geq 0$, we have $T_k (u^*) = T_k (u)^*$ and
\begin{align*}
\sum_{i+j=k} T_i (u) T_j (v) = T_i ( u T_j (v) + T_j (u) v ), ~\text{ for } u, v \in M.
\end{align*}
For $k = 1$, we get $T_1 (u^*) = T_1(u)^*$ and 
\begin{align*}
T(u) T_1 (v) + T_1(u) T(v) =  T ( u T_1 (v) + T_1 (u) v ) + T_1 ( u T (v) + T (u) v ).
\end{align*}
This says that $T_1 \in i \mathrm{Hom}(M,A)$ is a $1$-cocycle in the cohomology of the involutive relative Rota-Baxter operator $T$.

\begin{defn}
Two deformations $T_t = \sum_{i \geq 0} t^i T_i$ and $T_t' = \sum_{i \geq 0} t^i T'_i$ of an involutive relative Rota-Baxter operator $T$ are said to be equivalent if there is an element ${\bf a} \in A$ with ${\bf a}^* = - {\bf a}$ and linear maps $\phi_j \in i \mathrm{Hom}(A, A)$, $\psi_j \in i \mathrm{Hom}(M,M)$, for $j \geq 2$ such that
\begin{align*}
\big(   \phi_t = \mathrm{id}_A + t (\mathrm{ad}_{\bf a}^l - \mathrm{ad}^r_{\bf a}) + \sum_{j \geq 2} t^j \phi_j,~ 
\psi_t = \mathrm{id}_M + t (l_{\bf a} - r_{\bf a}) + \sum_{j \geq 2} t^j \psi_j  \big)
\end{align*}
defines a morphism of relative Rota-Baxter operators from $T_t$ to $T_t'$.
\end{defn}

Hence by Definition \ref{mor-defn}, the following conditions must hold: for all $a,b \in A$ and $u \in M$,
\begin{align*}
\phi_t (a) \phi_t (b) =~ \phi_t (ab),  \quad &T_t' \circ \psi_t (u) =~ \phi_t \circ T_t (u), \quad \psi_t (au) =~ \phi_t (a ) \psi_t (u) \quad \\
&\text{ and } ~~~ \psi_t (ua) =~ \psi_t (u) \phi_t (a).
\end{align*}
In the second equality, by equating coefficients of $t$ from both sides, we get
\begin{align*}
T_1(u) - T_1'(u) = T ({\bf a}u -u{\bf a}) - ( {\bf a} T(u) - T(u){\bf a}) = \delta^T_{\mathrm{Hoch}}({\bf a})(u).
\end{align*}

Summarizing the above discussions, we get the following.

\begin{thm}
Let $T_t = \sum_{i \geq 0} t^i T_i$ be a formal one-parameter deformation of an involutive relative Rota-Baxter operator $T$. Then the linear term $T_1$ is a $1$-cocycle in the cohomology of $T$ whose cohomology class depends only on the equivalence class of the deformation $T_t$.
\end{thm}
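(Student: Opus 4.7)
The plan is to assemble and formalise the two computations appearing in the paragraphs immediately preceding the theorem; the statement is in fact proved there, but needs to be packaged into a clean argument and the coboundary identification needs to be made precise.

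First I would verify that $T_1$ is a $1$-cocycle in the cohomology of $T$. The Rota-Baxter identity for $T_t$ and involutivity $T_t(u^*) = T_t(u)^*$, both of which hold in $\mathrm{Hom}(M,A)[[t]]$, must be satisfied coefficient-by-coefficient in $t$. Reading off the coefficient of $t^0$ just recovers that $T$ is an involutive relative Rota-Baxter operator. Reading off the coefficient of $t^1$ yields $T_1(u^*) = T_1(u)^*$, hence $T_1 \in i\mathrm{Hom}(M,A) = iC^1_{\mathrm{Hoch}}(M,A)$, together with the identity
\begin{align*}
T(u)T_1(v) + T_1(u)T(v) = T(uT_1(v) + T_1(u)v) + T_1(uT(v) + T(u)v).
\end{align*}
Rearranging this as
\begin{align*}
l_T(u, T_1(v)) - T_1(u \circledast v) + r_T(T_1(u), v) = 0
\end{align*}
shows precisely that $\delta^T_{\mathrm{Hoch}}(T_1) = 0$. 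Since $d_T f = (-1)^n \delta^T_{\mathrm{Hoch}} f$ on $iC^n_{\mathrm{Hoch}}(M,A)$, this is equivalent to $d_T(T_1) = 0$, so $T_1$ is an involutive $1$-cocycle.

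Next I would show that equivalent deformations yield cohomologous linear terms. Suppose $T_t$ and $T_t'$ are equivalent via $(\phi_t, \psi_t)$ as in the definition. The morphism conditions (Definition~\ref{mor-defn}) hold in the $\mathbb{K}[[t]]$-linear setting; extracting the coefficient of $t^1$ in the equality $T_t' \circ \psi_t = \phi_t \circ T_t$ gives
\begin{align*}
T_1'(u) + T(l_{\mathbf{a}}(u) - r_{\mathbf{a}}(u)) = T_1(u) + (\mathrm{ad}^l_{\mathbf{a}} - \mathrm{ad}^r_{\mathbf{a}})(T(u)),
\end{align*}
which rearranges to
\begin{align*}
T_1(u) - T_1'(u) = T(\mathbf{a} u - u \mathbf{a}) - (\mathbf{a}T(u) - T(u)\mathbf{a}) = \delta^T_{\mathrm{Hoch}}(\mathbf{a})(u).
\end{align*}
Since $\mathbf{a} \in iC^0_{\mathrm{Hoch}}(M,A) = \{a \in A \mid a^* = -a\}$ by hypothesis, this exhibits $T_1 - T_1'$ as a Hochschild $1$-coboundary, hence (up to the sign $(-1)^1$) as a $d_T$-coboundary. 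Therefore $[T_1] = [T_1']$ in $iH^1_T(M,A)$.

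There is no real obstacle: both halves amount to extracting the $t$-linear part of identities that already hold in the formal power series ring, and then re-interpreting the result via the explicit formula for $\delta^T_{\mathrm{Hoch}}$ given in (\ref{delta-ho}). The only point requiring a brief sanity check is that the involutivity constraints on $T_1$, $\mathbf{a}$, $\phi_j$, $\psi_j$ built into the definitions of formal deformation and of equivalence ensure that the cocycle and coboundary we produce actually live in the involutive subcomplex, and not merely in the ordinary one — but this is immediate from extracting coefficients of $t$ in $T_t(u^*) = T_t(u)^*$ and in the corresponding involutivity conditions on $\phi_t, \psi_t$.
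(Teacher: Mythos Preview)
Your proposal is correct and follows exactly the approach of the paper: extract the $t^1$-coefficient from the Rota-Baxter identity and from $T'_t \circ \psi_t = \phi_t \circ T_t$, then interpret the results as $\delta^T_{\mathrm{Hoch}}(T_1)=0$ and $T_1 - T_1' = \delta^T_{\mathrm{Hoch}}(\mathbf{a})$ respectively. Your only addition is the explicit rewriting of the $k=1$ identity in terms of $l_T$, $r_T$, and $\circledast$, which the paper leaves implicit but which is exactly the intended verification.
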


\subsection{Extensions of finite order deformations}
In this subsection, we consider extensions of a finite order deformation of an involutive relative Rota-Baxter operator $T$. Given a finite order deformation of $T$, we associate a second cohomology class in the cohomology of $T$. When the class is trivial, the deformation extends to next order.

Let $T: M \rightarrow A$ be a relative Rota-Baxter operator on an involutive associative algebra $A$ with respect to the involutive  $A$-bimodule $M$.

\begin{defn}
An order $N$ deformation of $T$ consists of a finite sum
$T_t =  \sum_{i=0}^N t^i T_i \in \mathrm{Hom}(M,A)[[t]]/ (t^{N+1})$ such that $T_0 = T$ and as a $\mathbb{K}[[t]]/ (t^{N+1})$-linear map $T_t : M[[t]]/ (t^{N+1}) \rightarrow A[[t]]/ (t^{N+1})$ is an involutive relative Rota-Baxter operator on $A[[t]]/ (t^{N+1})$ with respect to the involutive $A[[t]]/ (t^{N+1})$-bimodule $M[[t]]/ (t^{N+1})$.
\end{defn}

Therefore, we must have $T_k (u^*) = T_k(u)^*$ and
\begin{align*}
\sum_{i+j=k} T_i (u) T_j (v) = T_i ( u T_j (v) + T_j (u) v ), ~\text{ for } u, v \in M \text{ and } k =0,1, \ldots, N.
\end{align*}
The last condition is equivalent to the fact that
\begin{align*}
d_T (T_k) = - \frac{1}{2} \sum_{i+j = k , i, j \geq 1} \llbracket T_i, T_j \rrbracket, \text{ for } k =0,1, \ldots, N.
\end{align*}
\begin{defn}
A deformation $T_t = \sum_{i=0}^N t^i T_i$ of order $N$ is said to be extensible if there exists an element $T_{N+1} \in i \mathrm{Hom}(M,A)$ such that $\widetilde{T}_t = T_t + t^{N+1} T_{N+1}$ is a deformation of order $N+1$.
\end{defn}
In such a case, one more deformation equation needs to be satisfied, namely,
\begin{align}\label{n+1}
d_T (T_{N+1}) = - \frac{1}{2} \sum_{i+j = N+1 , i, j \geq 1} \llbracket T_i, T_j \rrbracket.
\end{align}
Note that the right hand side of (\ref{n+1}) depends only on $\{ T_1, \ldots, T_N \}$ and does'nt involve $T_{N+1}$. Hence it depends on the deformation $T_t$. This is called the obstruction to the extend the deformation $T_t$, denoted by $\mathrm{Ob}_{T_t}$.

\begin{prop}\label{obs-2}
$\mathrm{Ob}_{T_t}$ is a $2$-cocycle in the cohomology complex of $T$.
\end{prop}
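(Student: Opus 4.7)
The plan is to exploit the Maurer-Cartan (MC) interpretation of the deformation equations. Recall that an involutive relative Rota-Baxter operator $T$ is an MC element in the graded Lie algebra $\bigl(\oplus_{n \geq 0} i\mathrm{Hom}(M^{\otimes n}, A), \llbracket\cdot,\cdot\rrbracket\bigr)$, where every $T_i \in i\mathrm{Hom}(M, A)$ has degree $1$, and the differential is $d_T = \llbracket T, -\rrbracket$. Writing $T_t = T_0 + tT_1 + \cdots + t^N T_N$ with $T_0 = T$, the deformation equations for an order $N$ deformation are precisely the statement that the coefficient of $t^k$ in $\llbracket T_t, T_t\rrbracket$ vanishes for each $k = 0, 1, \ldots, N$. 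Since the bracket is graded symmetric on degree $1$ elements, an easy rearrangement shows that the coefficient of $t^{N+1}$ in $\llbracket T_t, T_t\rrbracket$ equals $\sum_{i+j = N+1,\ i,j \geq 1} \llbracket T_i, T_j\rrbracket = -2\,\mathrm{Ob}_{T_t}$.

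The key identity is that in any graded Lie algebra, a degree $1$ element $X$ satisfies $\llbracket X, \llbracket X, X\rrbracket\rrbracket = 0$; this follows directly from the graded Jacobi identity (the standard computation gives $3\llbracket X, \llbracket X, X\rrbracket\rrbracket = 0$, and we are in characteristic zero). Applying this to $X = T_t$ gives the formal-series identity $\llbracket T_t, \llbracket T_t, T_t\rrbracket\rrbracket = 0$.

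Now I extract the coefficient of $t^{N+1}$ from this identity. Let $H(t) := \llbracket T_t, T_t\rrbracket = \sum_{k \geq 0} t^k h_k$. By the deformation equations, $h_k = 0$ for all $k \leq N$, and $h_{N+1} = -2\,\mathrm{Ob}_{T_t}$. Then the coefficient of $t^{N+1}$ in $\llbracket T_t, H(t)\rrbracket = \sum_{i,k} t^{i+k} \llbracket T_i, h_k\rrbracket$ receives contributions only from pairs $(i, k)$ with $i + k = N+1$ and $h_k \neq 0$, i.e.\ only from $(i, k) = (0, N+1)$. Hence this coefficient equals $\llbracket T, h_{N+1}\rrbracket = -2\,\llbracket T, \mathrm{Ob}_{T_t}\rrbracket = -2\, d_T(\mathrm{Ob}_{T_t})$. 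Since the whole series vanishes, its coefficient at $t^{N+1}$ vanishes, giving $d_T(\mathrm{Ob}_{T_t}) = 0$.

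It remains to check that $\mathrm{Ob}_{T_t}$ indeed lies in the involutive cochain space $i\mathrm{Hom}(M^{\otimes 2}, A)$, so that it makes sense as a $2$-cocycle for $d_T$; but this is immediate from the fact (already recorded in Section~\ref{sec-mc}) that the bracket $\llbracket\cdot,\cdot\rrbracket$ preserves the involutive subspaces, together with the fact that each $T_i$ is an involutive $1$-cochain by definition of a deformation. The main (and essentially only) technical obstacle is verifying the book-keeping claim that the coefficient of $t^{N+1}$ in $\llbracket T_t, T_t\rrbracket$ for the \emph{truncated} series $T_t = \sum_{i=0}^N t^i T_i$ really is $-2\,\mathrm{Ob}_{T_t}$ (no contribution from a nonexistent $T_{N+1}$), which is a one-line check: the constraint $i + j = N+1$ with $0 \leq i, j \leq N$ forces $1 \leq i, j \leq N$.
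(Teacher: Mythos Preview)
Your proof is correct. The paper itself does not give a proof but simply cites \cite[Proposition~4.17]{das-rota}; the Maurer--Cartan argument you wrote out (using $\llbracket X,\llbracket X,X\rrbracket\rrbracket=0$ for a degree~$1$ element and extracting the $t^{N+1}$ coefficient) is precisely the standard argument behind that cited result, so your proposal is in fact a self-contained version of what the paper defers to a reference.
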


\begin{proof}
See \cite[Proposition 4.17]{das-rota}.
\end{proof}

The above proposition shows that a finite order deformation $T_t$ gives rise to a second cohomology class $[\mathrm{Ob}_{T_t}] \in iH^2_T (M,A)$, called the obstruction class. 

Hence from (\ref{n+1}) and Proposition \ref{obs-2}, we get the following.
\begin{thm}
A finite order deformation $T_t$ of an involutive relative Rota-Baxter operator $T$ extends to a deformation of next order if and only if the corresponding obstruction class $[\mathrm{Ob}_{T_t}] \in iH^2_T (M,A)$ is trivial.
\end{thm}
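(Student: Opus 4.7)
The plan is to deduce the theorem immediately from equation \eqref{n+1} together with Proposition \ref{obs-2}, since the theorem is really just the cohomological reinterpretation of solvability of \eqref{n+1} in the involutive cochain complex. First I would unwind what it means for $T_t$ to extend: by the definition of an order $N+1$ deformation, one needs the involutivity condition $T_{N+1}(u^*) = T_{N+1}(u)^*$ (i.e.\ $T_{N+1} \in i\mathrm{Hom}(M,A)$) together with the deformation equations at each order $k \leq N+1$. The equations for $k \leq N$ hold automatically because $T_t$ is already an order $N$ deformation, so the only new condition is the order $N+1$ equation, which as shown in the discussion above rearranges to precisely
\[
d_T(T_{N+1}) \;=\; -\tfrac{1}{2}\!\!\!\sum_{\substack{i+j=N+1\\ i,j\geq 1}}\!\!\!\llbracket T_i, T_j\rrbracket \;=\; \mathrm{Ob}_{T_t}.
\]

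For the forward direction I would argue: if $T_t$ extends via some $T_{N+1} \in i\mathrm{Hom}(M,A)$, then the displayed equation exhibits $\mathrm{Ob}_{T_t}$ as the $d_T$-image of an element of the involutive cochain space $iC^1_{\mathrm{Hoch}}(M,A) = i\mathrm{Hom}(M,A)$. Hence $\mathrm{Ob}_{T_t}$ is a coboundary in the involutive complex, and $[\mathrm{Ob}_{T_t}] = 0$ in $iH^2_T(M,A)$.

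For the converse I would argue: by Proposition \ref{obs-2}, $\mathrm{Ob}_{T_t}$ is already a cocycle in the involutive cochain complex of $T$, so the vanishing of $[\mathrm{Ob}_{T_t}] \in iH^2_T(M,A)$ produces an element $T_{N+1} \in i\mathrm{Hom}(M,A)$ with $d_T(T_{N+1}) = \mathrm{Ob}_{T_t}$. Setting $\widetilde{T}_t = T_t + t^{N+1}T_{N+1}$, the involutivity condition holds at every order (each $T_k$ and $T_{N+1}$ lie in $i\mathrm{Hom}(M,A)$), the Rota-Baxter deformation equations at orders $k \leq N$ are inherited from $T_t$, and the order $N+1$ equation is exactly $d_T(T_{N+1}) = \mathrm{Ob}_{T_t}$, which holds by construction.

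There is no real obstacle here, since the work has been done in advance: Proposition \ref{obs-2} guarantees that $\mathrm{Ob}_{T_t}$ is a cocycle in the correct subcomplex (so its class in $iH^2_T(M,A)$ is well-defined), and the key point I would emphasize is that the cohomology class is taken in the \emph{involutive} cohomology, which is precisely what allows us to pick the primitive $T_{N+1}$ inside $i\mathrm{Hom}(M,A)$ rather than merely in $\mathrm{Hom}(M,A)$; without this, the extension $\widetilde{T}_t$ would fail to be an \emph{involutive} relative Rota-Baxter operator. The proof is then a one-line invocation of \eqref{n+1}.
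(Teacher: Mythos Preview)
Your proposal is correct and matches the paper's approach exactly: the paper simply states that the theorem follows from equation \eqref{n+1} and Proposition \ref{obs-2}, and you have spelled out the (forward and backward) details of that one-line deduction. Your emphasis on needing the primitive $T_{N+1}$ to lie in $i\mathrm{Hom}(M,A)$, and hence on taking the obstruction class in the involutive cohomology, is the right point to highlight.
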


\begin{corollary}
If $iH^2_T (M,A) = 0$ then every finite order deformation of $T$ extends to a deformation of next order.
\end{corollary}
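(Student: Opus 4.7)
The statement is an immediate consequence of the preceding theorem, so the plan is essentially to record the trivial logical implication rather than to produce new content. By the theorem, a finite order deformation $T_t$ of an involutive relative Rota-Baxter operator $T$ extends to the next order if and only if the obstruction class $[\mathrm{Ob}_{T_t}] \in iH^2_T(M,A)$ vanishes. If the entire cohomology group $iH^2_T(M,A)$ is zero, then every element of it is zero, and in particular the obstruction class attached to any finite order deformation $T_t$ is automatically trivial.

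So the plan is: fix an arbitrary finite order deformation $T_t = \sum_{i=0}^N t^i T_i$ of $T$, recall from Proposition \ref{obs-2} that the associated obstruction $\mathrm{Ob}_{T_t}$ is a $2$-cocycle in the cohomology complex computing $iH^\bullet_T(M,A)$, observe that its cohomology class lies in $iH^2_T(M,A) = 0$ and is therefore trivial, and then invoke the preceding theorem to conclude that $T_t$ extends to an order $N+1$ deformation.

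There is no real obstacle here; the entire content of the corollary is vacuously contained in the theorem once one notes that the zero group contains only the trivial class. The only thing worth being careful about is to state the argument in the correct direction (the vanishing of the ambient group forces the vanishing of every obstruction class, not the other way around). The proof itself can be written in a single sentence invoking the previous theorem.
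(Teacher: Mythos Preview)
Your proposal is correct and matches the paper's approach exactly: the paper states the corollary without proof, treating it as an immediate consequence of the preceding theorem, and your one-line deduction (vanishing of $iH^2_T(M,A)$ forces every obstruction class to be trivial, so the theorem applies) is precisely the intended argument.
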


\section{Fard-Guo functor for involutive algebras}\label{sec-fard-guo}
In \cite{fard-guo} Ebrahimi-Fard and Guo constructs the universal enveloping Rota-Baxter algebra of a dendriform algebra. Here we recall their construction and observe that it passes to the involutive case.

Let $B$ be a nonunitary associative algebra. Let $X$ be a basis for $B$, and let $X' = X \cup \{ \lfloor , \rfloor \}$. Here $\lfloor$ and $\rfloor$ are two symbols, called brackets. Let $M (X')$ be the free semigroup generated by $X'$.

There is a sequence $\{ \mathfrak{X}_n \} $ of subsets of $M(X')$ defined by the following recursive formula: $\mathfrak{X}_0 = X$ and for $n \geq 0$,
\begin{align*}
\mathfrak{X}_{n+1} = \bigg( \bigcup_{r \geq 1} (X \lfloor \mathfrak{X}_n \rfloor )^r \bigg) \bigcup \bigg( \bigcup_{r \geq 0} (X \lfloor \mathfrak{X}_n \rfloor )^r X  \bigg) \bigcup \bigg( \bigcup_{r \geq 1}  ( \lfloor \mathfrak{X}_n \rfloor X )^r  \bigg) \bigcup \bigg( \bigcup_{r \geq 0} ( \lfloor \mathfrak{X}_n \rfloor X)^r \lfloor \mathfrak{X}_n \rfloor \bigg) .
\end{align*}
Then $\mathfrak{X}_{n+1} \supset \mathfrak{X}_n$, for $n \geq 0$. Define $\mathfrak{X}_\infty = \cup_{n \geq 0} \mathfrak{X}_n = \lim_{\to} \mathfrak{X}_n $. The words of $\mathfrak{X}_\infty$ are called Rota-Baxter words. Every Rota-Baxter word ${\bf x} \neq {\bf 1}$ has a unique decomposition (called standard decomposition) ${ \bf x} = {\bf x}_1 \cdots {\bf x}_b$, where ${\bf x}_i, ~ 1 \leq i \leq b$, is alternatively in $X$ or in $\lfloor X_\infty \rfloor.$ The number $b$ is called the breadth of ${\bf x}$, denoted by $b ({\bf x})$. We define the head $h ({\bf x})$ of ${\bf x}$ to be $0$ (resp. $1$) if ${\bf x}_1$ is in $X$ (resp. in $\lfloor \mathfrak{X}_\infty \rfloor$). Similarly, the tail $t({\bf x})$ to be defined as $0$ (resp. $1$) if ${\bf x}_b$ is in $X$ (resp. in $\lfloor \mathfrak{X}_\infty \rfloor$). Finally, the depth of ${\bf x}$ is defined as $d ({\bf x}) = \mathrm{min}\{ n | {\bf x} \in \mathfrak{X}_n \}$.

%\vspace*{1cm}

Define $\varpi^{\mathrm{NC}, 0} (B) = \bigoplus_{ {\bf x} \in \mathfrak{X}_\infty} \mathbb{K} {\bf x}$. For ${\bf x}, {\bf x}' \in \mathfrak{X}_\infty$ with $t ( {\bf x}) \neq h ({\bf x}')$, we define a product ${\bf x} \diamond {\bf x}'$ by the concatenation. For ${\bf x}, {\bf x}' \in \mathfrak{X}_\infty$ with  $t ( {\bf x}) = h ({\bf x}')$, we define ${\bf x} \diamond {\bf x}' $ using the induction on $n = d({\bf x}) + d({\bf x}')$. 
If $n= 0$, then ${\bf x}, {\bf x}'$ is in $X$, hence in $B$, and the product ${\bf x} \diamond {\bf x}' := {\bf x} \cdot {\bf x}'$ (the product in $B$). Suppose the product is defined for $n =k \geq 0$ and we want to define for $n = k+1$. If $b ({\bf x}) = b({\bf x}') = 1$, then
\begin{align}\label{prod-fard}
{\bf x} \diamond {\bf x}' = \begin{cases} {\bf x} \cdot {\bf x}' ~~(\text{the product in } B)  & \text{ if } {\bf x}, {\bf x}' \in X \\
{\bf x}{\bf x}' \quad (\text{concatenation})  & \text{ if } {\bf x} \in X, {\bf x}' \in \lfloor \mathfrak{X}_\infty \rfloor \text{ or } {\bf x} \in \lfloor \mathfrak{X}_\infty \rfloor, {\bf x}' \in X \\
\lfloor \lfloor \overline{x} \rfloor \diamond \overline{x}' \rfloor + \lfloor \overline{x} \diamond \lfloor \overline{x}'\rfloor \rfloor  & \text{ if }  {\bf x} =\lfloor \overline{x} \rfloor , {\bf x}' = \lfloor \overline{x}' \rfloor \in \lfloor \mathfrak{X}_\infty \rfloor.
\end{cases}
\end{align}
Finally, if $b ({\bf x} ) > 1$ or $b({\bf x}') > 1$, take  ${\bf x} = {\bf x}_1 \cdots {\bf x}_b$ and ${\bf x}' = {\bf x}'_1 \cdots {\bf x}'_{b'}$ be standard decompositions of ${\bf x}$ and ${\bf x}'$. In this case, we define
\begin{align*}
{\bf x} \diamond {\bf x}' = {\bf x}_1 \cdots {\bf x}_{b-1} ( {\bf x}_b \diamond {\bf x}_1') {\bf x}'_2 \cdots {\bf x}'_{b'},
\end{align*}
where $ {\bf x}_b \diamond {\bf x}_1'$ is defined by (\ref{prod-fard}).
Then $(\varpi^{\mathrm{NC}, 0} (B), \diamond)$ is a nonunitary associative algebra and $R_B : \varpi^{\mathrm{NC}, 0} (B) \rightarrow \varpi^{\mathrm{NC}, 0} (B)$ defined by $R_B(x) = \lfloor x \rfloor$, for $x \in \mathfrak{X}_\infty$ is a Rota-Baxter operator on $(\varpi^{\mathrm{NC}, 0} (B), \diamond)$. We also consider the natural inclusion $j_X : X \rightarrow \mathfrak{X}_\infty \rightarrow \varpi^{\mathrm{NC}, 0} (B)$ which extends to an injective algebra map $j_B : B \rightarrow \varpi^{\mathrm{NC}, 0} (B)$. 

For any vector space $V$, consider the tensor algebra $T(V) = \oplus_{n \geq 1} V^{\otimes n}$. Then $(\varpi^{\mathrm{NC}, 0} (T(V)), \diamond, R_{T(V)})$ is a `free' nonunitary Rota-Baxter algebra over $V$ \cite{fard-guo}.
Let $(D, \prec, \succ)$ be a dendriform algebra. Consider the free nonunitary Rota-Baxter algebra $\varpi^{\mathrm{NC}, 0} (T(D))$ over the vector space $D$. Let $J_R$ be the Rota-Baxter ideal of $\varpi^{\mathrm{NC}, 0} (T(D))$ generated by the set
\begin{align*}
\{ x \prec y - x \lfloor y \rfloor, ~ x \succ y - \lfloor x \rfloor y ~|~ x, y \in D \}.
\end{align*}
Then the quotient Rota-Baxter algebra $\varpi^{\mathrm{NC}, 0} (T(D)) / J_R$ is the universal enveloping Rota-Baxter algebra of $D$.

Note that, if we start with a nonunitary involutive associative algebra $B$, then $(\varpi^{\mathrm{NC}, 0} (B), \diamond, R_B)$ can be given an involutive Rota-Baxter algebra with the involution given on basis elements by the involution on $B$ (when ${\bf x} \in X \subset B$),
\begin{align*}
\lfloor {\bf x} \rfloor^* = \lfloor {\bf x}^* \rfloor  ~~~ \text{ and } ~~~ ({\bf x}_1 \cdots {\bf x}_b)^* = {\bf x}_b^* \cdots {\bf x}_1^*.
\end{align*}

%\vspace*{1cm}

If $V$ is an involutive vector space, then $T(V)$ is an involutive algebra with involution $(v_1 \otimes \cdots \otimes v_n)^* = v_n^* \otimes \cdots \otimes v_1^*$. Hence $(\varpi^{\mathrm{NC}, 0} (T(V)) , \diamond, R_{T(V)}) $ is an involutive Rota-Baxter algebra. This is free in the following sense \cite{fard-guo}.
\begin{prop}
Let $V$ be an involutive vector space. Then for any nonunitary involutive Rota-Baxter algebra $A$ and a linear map $f: V \rightarrow A$ preserving involutions, there exists a unique nonunitary involutive Rota-Baxter algebra morphism $\widetilde{f} : \varpi^{\mathrm{NC}, 0} (T(V)) \rightarrow A$ such that $\widetilde{f} \circ (j_{T(V)} \circ i) = f$, where $i: V \rightarrow T(V)$ is the inclusion.
\end{prop}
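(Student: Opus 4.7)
The plan is to reduce this to the non-involutive universal property of $\varpi^{\mathrm{NC},0}(T(V))$ already established in \cite{fard-guo}, and then show that the resulting morphism automatically respects the involutions. First, I would extend $f : V \to A$ to a linear map $T(V) \to A$ by $v_1 \otimes \cdots \otimes v_n \mapsto f(v_1) \cdots f(v_n)$; since $f$ preserves involutions and $(a_1 \cdots a_n)^* = a_n^* \cdots a_1^*$ in $A$, this extension is an involutive (nonunitary) algebra morphism. Applying the non-involutive free property from \cite{fard-guo} to the resulting map $T(V) \to A$ produces a unique nonunitary Rota-Baxter algebra morphism $\widetilde{f} : \varpi^{\mathrm{NC}, 0}(T(V)) \to A$ such that $\widetilde{f} \circ (j_{T(V)} \circ i) = f$.

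The content of the statement is therefore the extra claim that $\widetilde{f}$ preserves the involutions. To prove this, I would define an auxiliary map
\[
\widetilde{f}^{\dagger} : \varpi^{\mathrm{NC},0}(T(V)) \to A, \qquad \widetilde{f}^{\dagger}(x) := \widetilde{f}(x^*)^*,
\]
and show it is also a nonunitary Rota-Baxter algebra morphism extending $f$. Uniqueness from the non-involutive free property then forces $\widetilde{f}^{\dagger} = \widetilde{f}$, which is exactly the condition $\widetilde{f}(x^*) = \widetilde{f}(x)^*$.

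The verifications are straightforward book-keeping with signs and orders. For the algebra property, using $(x \diamond y)^* = y^* \diamond x^*$ in $\varpi^{\mathrm{NC},0}(T(V))$ and $(ab)^* = b^* a^*$ in $A$ gives
\[
\widetilde{f}^{\dagger}(x \diamond y) = \widetilde{f}(y^* \diamond x^*)^* = \bigl(\widetilde{f}(y^*)\widetilde{f}(x^*)\bigr)^* = \widetilde{f}(x^*)^* \widetilde{f}(y^*)^* = \widetilde{f}^{\dagger}(x)\widetilde{f}^{\dagger}(y).
\]
For the Rota-Baxter compatibility, one uses $R_{T(V)}(x)^* = \lfloor x \rfloor^* = \lfloor x^* \rfloor = R_{T(V)}(x^*)$, together with $R_A(a)^* = R_A(a^*)$ in the involutive Rota-Baxter algebra $A$, to conclude $\widetilde{f}^{\dagger} \circ R_{T(V)} = R_A \circ \widetilde{f}^{\dagger}$. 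Finally, for $v \in V$, one has $(j_{T(V)}\circ i)(v)^* = (j_{T(V)}\circ i)(v^*)$, so $\widetilde{f}^{\dagger}\circ (j_{T(V)}\circ i)(v) = \widetilde{f}(j_{T(V)}\circ i(v^*))^* = f(v^*)^* = f(v)$ because $f$ preserves involutions.

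The main (mild) obstacle is keeping track of the fact that the involution on $\varpi^{\mathrm{NC},0}(T(V))$ is compatible with the product $\diamond$ and with the Rota-Baxter operator $R_{T(V)} = \lfloor \cdot \rfloor$, i.e.\ that $\varpi^{\mathrm{NC},0}(T(V))$ really is an \emph{involutive} Rota-Baxter algebra under the involution defined on basis elements. This can be checked inductively on the depth and breadth using the recursive definition of $\diamond$ in (\ref{prod-fard}): concatenation cases are immediate from $({\bf x}_1 \cdots {\bf x}_b)^* = {\bf x}_b^* \cdots {\bf x}_1^*$, and in the bracketed case $\lfloor\lfloor \overline{x}\rfloor \diamond \overline{x}'\rfloor + \lfloor \overline{x}\diamond \lfloor \overline{x}'\rfloor \rfloor$ the two summands swap under $*$, matching $\lfloor \overline{x}'\rfloor \diamond \lfloor \overline{x}\rfloor$ applied to the starred arguments. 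Once this is in place, the above computation for $\widetilde{f}^{\dagger}$ goes through verbatim and, combined with uniqueness, completes the proof.
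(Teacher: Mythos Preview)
Your argument is correct. The paper itself does not supply a proof of this proposition: it simply states the result, with the citation to \cite{fard-guo} pointing to the non-involutive universal property, and relies on the preceding paragraph's assertion that $(\varpi^{\mathrm{NC},0}(B),\diamond,R_B)$ inherits an involution from $B$. Your proposal fills in exactly the details the paper leaves to the reader, and the ``conjugate-by-involution'' trick $\widetilde{f}^{\dagger}(x):=\widetilde{f}(x^*)^*$ combined with uniqueness from the non-involutive free property is the standard and natural way to do this. The only point to be careful about is the one you already flagged: that $(x\diamond y)^* = y^*\diamond x^*$ holds in $\varpi^{\mathrm{NC},0}(T(V))$, which the paper also asserts without proof and which your inductive sketch handles correctly.
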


Finally, for an involutive dendriform algebra $(D, \prec, \succ, *)$, the idear $J_R$ of the nonunitary involutive Rota-Baxter algebra $\varpi^{\mathrm{NC}, 0} (T(D))$ preserves under the involution as
\begin{align*}
( x \prec y - x \lfloor y \rfloor)^* = y^* \succ x^* - \lfloor y^* \rfloor x^* \in J_R ~~~ \text{ and } ~~~
( x \succ y - \lfloor x \rfloor y)^* = y^* \prec x^* - y^* \lfloor x^* \rfloor \in J_R.
\end{align*}
Hence we get the following.

\begin{prop}
If  $(D, \prec, \succ, *)$ is an involutive dendriform algebra then the universal enveloping Rota-Baxter algebra $\varpi^{\mathrm{NC}, 0} (T(D))/J_R$ is involutive.
\end{prop}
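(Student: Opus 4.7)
The plan is to verify that the Rota-Baxter ideal $J_R$ is stable under the involution on $\varpi^{\mathrm{NC},0}(T(D))$ constructed in the previous proposition, and then to descend the involution to the quotient by the usual universal argument.

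First, I would set the stage. Since $(D,*)$ is an involutive vector space, the tensor algebra $T(D)$ is an involutive nonunitary associative algebra with $(v_1 \otimes \cdots \otimes v_n)^* = v_n^* \otimes \cdots \otimes v_1^*$. By the construction recalled immediately before the proposition, $\varpi^{\mathrm{NC},0}(T(D))$ is then a nonunitary involutive Rota-Baxter algebra: on a basis Rota-Baxter word one puts $\lfloor \mathbf{x} \rfloor^* = \lfloor \mathbf{x}^* \rfloor$ and $(\mathbf{x}_1 \cdots \mathbf{x}_b)^* = \mathbf{x}_b^* \cdots \mathbf{x}_1^*$, with the involution on $X \subset T(D)$ induced from $T(D)$. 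The map $*$ is an antimultiplicative linear involution commuting with the Rota-Baxter operator $R_{T(D)}$.

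Next, I would show that the two families of generators of $J_R$ are swapped by the involution. This is precisely the computation already displayed in the excerpt: using that $*$ reverses $\diamond$-products, that $\lfloor \cdot \rfloor^* = \lfloor \cdot^* \rfloor$, and the involutive dendriform axiom $(x \prec y)^* = y^* \succ x^*$, one obtains
\begin{align*}
(x \prec y - x \lfloor y \rfloor)^* = y^* \succ x^* - \lfloor y^* \rfloor x^*, \qquad
(x \succ y - \lfloor x \rfloor y)^* = y^* \prec x^* - y^* \lfloor x^* \rfloor,
\end{align*}
and both expressions on the right are again generators of $J_R$ (with the roles of the two defining relations exchanged and $x,y$ replaced by $y^*,x^*$). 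Since $*$ is an antimultiplicative involution that commutes with the Rota-Baxter operator, sending a generating set of a Rota-Baxter ideal into that same ideal forces $J_R^* \subseteq J_R$; applying $*$ again gives equality, so $J_R$ is a Rota-Baxter $*$-ideal of $\varpi^{\mathrm{NC},0}(T(D))$.

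Finally, I would conclude by the standard quotient argument. The $*$-invariance of $J_R$ means that $[\mathbf{x}]^* := [\mathbf{x}^*]$ is a well-defined linear endomorphism of $\varpi^{\mathrm{NC},0}(T(D))/J_R$; it is automatically involutive, antimultiplicative for the induced $\diamond$-product, and commutes with the induced Rota-Baxter operator $\overline{R}_{T(D)}$, because all of these properties hold before passing to the quotient. Hence $\varpi^{\mathrm{NC},0}(T(D))/J_R$ is an involutive Rota-Baxter algebra, as claimed. The only point requiring any care is the second step, verifying that each generator of $J_R$ is sent by $*$ into $J_R$; this is essentially immediate once one invokes both the antimultiplicativity of $*$ and the involutive dendriform axiom, so no real obstacle arises.
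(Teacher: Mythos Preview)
Your proposal is correct and follows exactly the paper's approach: the paper establishes the proposition by the displayed computation showing that the generators of $J_R$ are sent by $*$ back into $J_R$, and then concludes immediately. Your write-up merely spells out the routine descent-to-the-quotient step in slightly more detail.
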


\noindent {\em Acknowledgements.} The research is supported by the fellowship of Indian Institute of Technology (IIT) Kanpur. The author thanks the Institute for support.

\end{document}